\newcommand\VRule[1][\arrayrulewidth]{\vrule width #1}
\newcommand{\myps}{
\psfrag{a}{$a$}
\psfrag{b}{$b$}
\psfrag{N(a)}{$N(a)$}
\psfrag{N(b)}{$N(b)$}
\psfrag{Nb}{$N_b$}
\psfrag{Na}{$N_a$}
\psfrag{N2a}{$N^2_a$}
\psfrag{N2b}{$N^2_b$}
\psfrag{N2(b)}{$N^2(b)$}
\psfrag{N2(a)}{$N^2(a)$}
\psfrag{U}{$U$}
\psfrag{V}{$V$}
\psfrag{P}{$P$}
\psfrag{1}{1}
\psfrag{2}{2}
\psfrag{3}{3}
\psfrag{4}{4}
\psfrag{5}{5}
\psfrag{6}{6}
\psfrag{7}{7}
\psfrag{8}{8}
\psfrag{9}{9}
\psfrag{C}{$C$}
\psfrag{C'}{$C'$}}
\DeclareMathOperator*{\argmax}{\arg\max}
\DeclareMathOperator*{\sums}{\sum\sum}
\newtheorem{theorem}{Theorem}
\newtheorem{lemma}[theorem]{Lemma}
\newcommand{\num}[1]{\text{\footnotesize~~#1:}~~}
\journal{European Journal of Operational Research}
\begin{document}

\begin{frontmatter}

\title{On Solving Manufacturing Cell Formation via Bicluster Editing}

\author[uff]{Rian G. S. Pinheiro}\ead{rian.gabriel@ic.uff.com}
\author[uff]{Ivan C. Martins}\ead{imartins@ic.uff.br}
\author[uff]{F\'abio Protti}\ead{fabio@ic.uff.br}
\author[uff]{Luiz S. Ochi}\ead{satoru@ic.uff.br}
\author[uff]{Luidi G. Simonetti}\ead{luidi@ic.uff.br}
\author[ufpb]{Anand Subramanian\corref{cor1}}\ead{anand@ct.ufpb.br}
\cortext[cor1]{Corresponding author. Tel. +55 83 3216-7549; Fax +55 83 3216-7179.}
\address[uff]{Fluminense Federal University\\
Niter\'oi, RJ - Brazil}
\address[ufpb]{Federal University of Para\'iba\\
Jo\~ao Pessoa, PB - Brazil}

\newcommand{\Cpp}{C\nolinebreak\hspace{-.05em}\raisebox{.4ex}{\tiny\bf +}\nolinebreak\hspace{-.10em}\raisebox{.4ex}{\tiny\bf +}~}
\newacronym{bgep}{BGEP}{Bicluster Graph Editing Problem}
\newacronym{mcfp}{MCFP}{Manufacturing Cell Formation Problem}
\newacronym{bgeps}{BGEPS}{Bicluster Graph Editing Problem with Size Restriction}
\newacronym{bgepsp}{BGEPS($\lambda$)}{Bicluster Graph Editing Problem with Size Restriction($\lambda$)}
\newacronym{cgep}{CGEP}{Cluster Graph Editing Problem}

\begin{abstract}
This work investigates the Bicluster Graph Editing Problem (BGEP) and how it can be applied to solve the Manufacturing Cell Formation Problem (MCFP). We develop an exact method for the BGEP that consists of a Branch-and-Cut approach combined with a special separation algorithm based on dynamic programming. We also describe a new preprocessing procedure for the BGEP derived from theoretical results on vertex distances in the input graph. Computational experiments performed on randomly generated instances with various levels of difficulty show that our separation algorithm accelerates the convergence speed, and our preprocessing procedure is effective for low density instances. Other contribution of this work is to reveal the similarities  between the BGEP and the MCFP. We show that the BGEP and the MCFP have the same solution space. This fact leads to the proposal of two new exact approaches for the MCFP based on mathematical formulations for the BGEP. Both approaches use the grouping efficacy measure as the objective function. Up to the authors' knowledge, these are the first exact methods that employ such a measure to optimally solve instances of the MCFP. The first approach consists of iteratively running several calls to a parameterized version of the BGEP, and the second is a linearization of a new fractional-linear model for the MCFP. Computational experiments performed on instances of the MCFP found in the literature show that our exact methods for the MCFP are able to prove several previously unknown optima.
\end{abstract}

\begin{keyword}
Biclusterization \sep Manufacturing Cell Formation \sep Graph Partitioning
\end{keyword}
\end{frontmatter}

\section{Introduction}
The \gls{bgep} is described as follows: given a bipartite graph $G = (U, V, E)$, where $U$ and $V$ are non-empty stable sets of vertices and $E$ is a set of edges linking vertices in $U$ to vertices in $V$, the goal is to transform $G$ into a disjoint union of complete bipartite graphs (or {\it bicliques}) by performing a minimum number of {\it edge editing operations}. Each edge editing operation consists of either removing an existing edge in $E$ or adding to $E$ a new edge between a vertex in $U$ to a vertex in $V$.

In a bipartite graph $G$, a {\em bicluster} is a subgraph of $G$ isomorphic to a biclique. The existence of biclusters  indicates a high degree of similarity between the data (vertices). In particular, a perfectly clustered bipartite graph is called a \textit{bicluster graph}, i.e., a bipartite graph in which each of its connected components is a biclique. Hence, we can alternatively define the goal of the \gls{bgep}, as stated by \citet{Amit2004},  as follows: ``find a minimum number of edge editing operations in order to transform an input bipartite graph into a bicluster graph''.

\begin{figure}[hbt]
\myps
\centering
\subfloat[Instance.]{\includegraphics[width=0.3\textwidth]{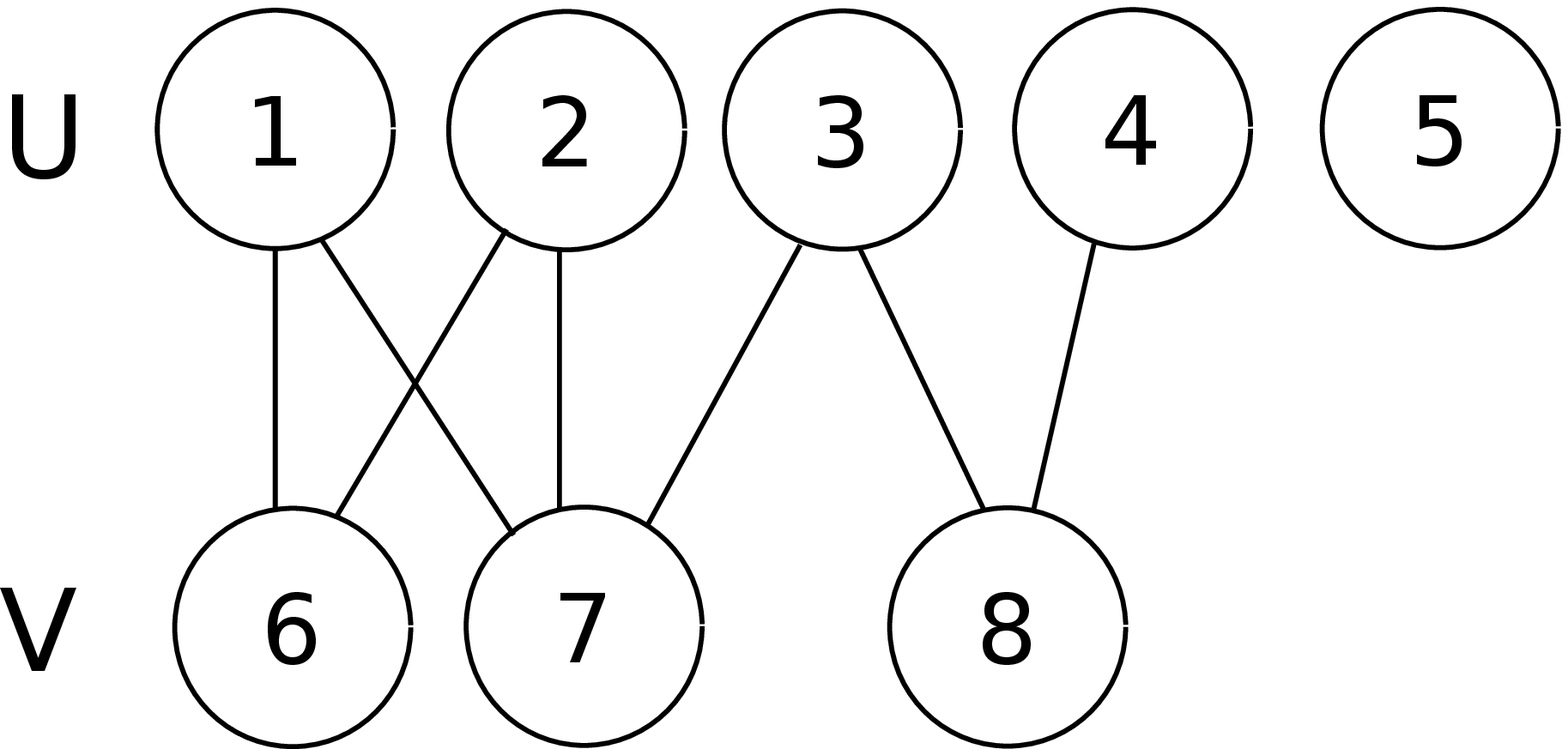}}\hspace{0.1\textwidth}
\subfloat[Solution.]{\includegraphics[width=0.33\textwidth]{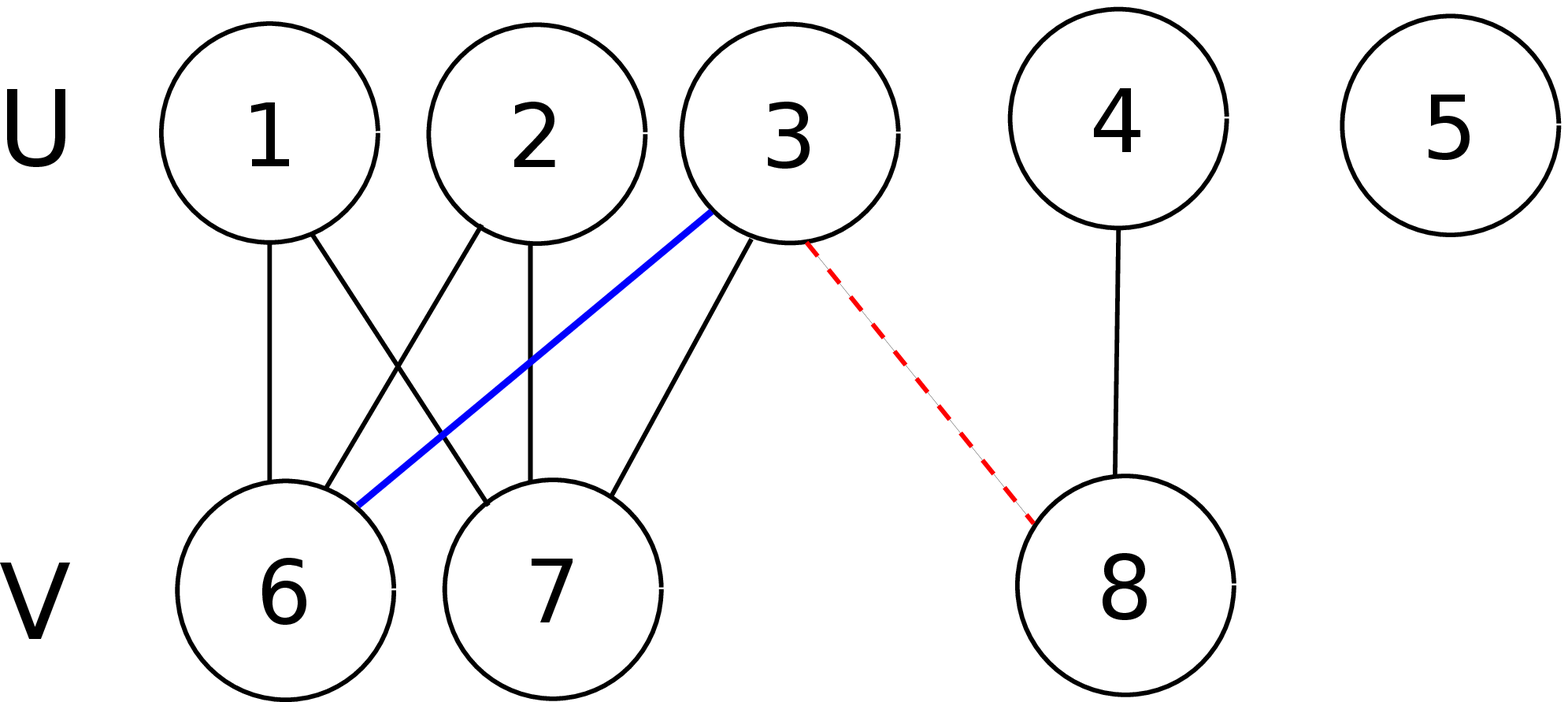}}
\caption{BGEP Example.} \label{fig:ex}
\end{figure}

\autoref{fig:ex} shows an example where adding an edge between vertices $3,6$ and deleting the edge between vertices $3,8$ transforms $G$ into a bicluster graph. Note that this does not correspond to an optimal solution, since $G$ can also be transformed into a bicluster graph by simply removing the edge between $3$ and $7$. We remark that a single vertex is considered as a bicluster (e.g., vertex 5 in \autoref{fig:ex}).

A problem similar to the \gls{bgep} is the \gls{cgep}, first studied by \citet{Gupta1979}. Its goal is to transform $G$ into a disjoint union of complete graphs (cliques). The \gls{cgep} and the \gls{bgep} are important examples of partition problems in graphs.

The concept of biclustering was introduced in the mid-70s by \citet{Hartigan1975}, but its first use appeared in a paper by \citet{Cheng2000}, within the context of Computational Biology. Since then, algorithms for biclustering have been proposed and used in various applications, such as multicast network design~\citep{Faure2007} and analysis of biological data~\citep{Abdullah2006,Bisson2008}.

In Biology, concepts such as co-clustering, two-way clustering, among others, are often used in the literature to refer to the same problem. Matrices are used instead of graphs to represent relationships between genes and characteristics, and their rows and columns represent graph partitions; in this case, the goal is to find significant submatrices having certain patterns. The \gls{bgep} can be used to solve any problem whose goal is to obtain a biclusterization with {\it exclusive} rows and columns, i.e., each gene (characteristic) must be associated with only one submatrix.

\citet{Amit2004} proved the $\mathcal{NP}$-hardness of the \gls{bgep} via a polynomial reduction from the 3-Exact 3-Cover Problem; in the same work, a binary integer programming formulation and an 11-approximation algorithm based on the relaxation of a linear program are described. \citet{Protti2006} proposed an algorithm for the parameterized version of the \gls{bgep} that uses a strategy based on modular decomposition techniques. \citet{Guo2008} developed a randomized 4-approximation algorithm for the \gls{bgep}. More recently, \citet{SousaFilho2012} proposed a GRASP-based heuristic for the \gls{bgep}.

Other important application of the \gls{bgep}, introduced in this work, is related to the \gls{mcfp}. We show that such problems have a high degree of similarity, and that good solutions for the \gls{bgep} are close to good solutions for the \gls{mcfp}. Cellular manufacturing is an application of the Group Technology concept. The goal is to identify and cluster similar parts in order to optimize the manufacturing process. Such a concept was originally proposed by \citet{Flanders1924} and formally described by \citet{Mitrofanov1966} in \citeyear{Mitrofanov1966}. In the early 70s, \citet{Burbidge1971} proposed one of the first techniques for creating a system of cellular manufacturing. Since this work, several approaches have been proposed to the \gls{mcfp}, whose goal is to create the cells in order to optimize the manufacturing process, as described in Section~\ref{sec:cell}.

Our contributions can be summarized as follows. In Section~\ref{sec:branch}, we develop an exact method for the \gls{bgep} consisting of a Branch-and-Cut approach combined with a special separation algorithm based on dynamic programming, and we describe a new preprocessing procedure for the \gls{bgep} derived from theoretical results on vertex distances in the input graph. In Section~\ref{sec:cell}, we explore the similarity between the \gls{bgep} and the MCFP. We show that the \gls{bgep} and the \gls{mcfp} have the same solution space, and due to this fact we propose two new exact approaches for the \gls{mcfp} based on mathematical formulations for the \gls{bgep}. Both approaches use the grouping efficacy measure as the objective function. Up to the authors' knowledge, these are the first exact methods that employ such a measure to optimally solve instances of the \gls{mcfp}. The first approach (Section 3.3) consists of iteratively running several calls to a parameterized version of the \gls{bgep}, and the second (Section 3.4) is a linearization of a new fractional-linear model for the \gls{mcfp}. In Section~\ref{sec:results}, we apply our Branch-and-Cut method for the \gls{bgep} to randomly generated BGEP instances with various levels of difficulty. Experimental results show that our separation algorithm is able to accelerate the convergence speed, and our preprocessing procedure for the \gls{bgep} is effective for low density instances. In addition, computational experiments are performed on instances of the \gls{mcfp} found in the literature. Our exact methods for the \gls{mcfp} are able to prove several previously unknown optima. Section~\ref{sec:conclusions} contains our conclusions.

\section{Branch-and-Cut Approach for the BGEP}\label{sec:branch}

A mathematical model for the \gls{bgep} is described in \citet{Amit2004}. It relies on the simple fact that the graph $P_4$ (a path with four vertices, shown in \autoref{fig:p4}) is a forbidden induced subgraph for a bicluster graph. More precisely, for a bipartite graph $G$, $G$ is a bicluster graph if and only $G$ does not contain $P_4$ as an induced subgraph.

\begin{figure}[hbt]
\centering
 \begin{tikzpicture}
 \node(i) at (0,0)[shape=circle,draw=black]{$i$}; \node(k) at (0,-1)[shape=circle,draw=black]{$k$};
 \node(l) at (4,0)[shape=circle,draw=black]{$l$}; \node(j) at (4,-1)[shape=circle,draw=black]{$j$};
 \draw (i) to (l); \draw (l) to (k); \draw (k) to (j);
 \end{tikzpicture}
 \caption{Graph $P_4$.}
 \label{fig:p4}
\end{figure}
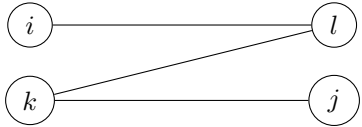

The formulation proposed in \citet{Amit2004} is as follows:

\begin{align}
\label{fo} \min &\quad\sum_{+(ij)}{(1-y_{ij})} + \sum_{-(ij)}{y_{ij}} &\\
\label{p4} \text{s.t.} &\quad  y_{il} + y_{kj} + y_{kl} \leq 2 + y_{ij}	 &\forall \ i\neq k \in U ~\mathrm{and}~ j\neq l \in V \\
\label{int}	&\quad y_{ij} \in \{0,1\} &\forall \ i \in U ~\mathrm{and}~ j \in V
\end{align}

\noindent where: (a) $y_{ij}$ are binary variables such that $y_{ij}=1$ if and only if the solution contains edge $ij$; (b) $+(ij) = \{ij \mid ij \in E\}$ is the set of edges; (c) $-(ij) = \{ij \mid ij \notin E\}$ the set of non-edges. The objective function \eqref{fo} counts how many edge editing operations are made. The first and second sums represent the number of edge deletions and edge additions, respectively. Constraints \eqref{p4} eliminate induced subgraphs isomorphic to $P_4$. Constraints \eqref{int} define the domain of the variables.

\subsection{Separation Algorithm}\label{sec:sep}
Note that the number of constraints \eqref{p4} in the above formulation is $|U|^2 |V|^2$ and therefore it is not advisable to consider all these constraints \textit{a priori}. This becomes computationally expensive for exact methods, especially when dealing with large instances. Alternatively, we start without such constraints and we add them in a cutting plane fashion as they are violated according to the separation algorithm described below:

\begin{algorithm}
\caption{Separation algorithm}
\label{alg:separation}
\vspace*{.5\baselineskip}
\begin{tabbing}
\num{1}{\bf procedure} Separation(relaxation $y^*$)\\
\\
\num{2}\qquad \=$\forall i \in U \text{~and~} j \in V$ \hspace{2cm}  \=  $d_{ij}^{1}~$ \= $=$ \= $ y^*_{ij}$ \hspace{4cm} \= \\
\\
\num{3}\>$\forall i,k \in U$ \> ${d_{ik}^{2}}\!'$ \> = \> $\max\limits_{l \in V }\{d_{il}^{1} + d_{kl}^{1}\}$ \> $l'_{ik} = \argmax\limits_{l \in V }\{d_{il}^{1} + d_{kl}^{1}\}$\\
\\
\num{4}\>$\forall i,k \in U$ \> ${d_{ik}^{2}}\!''$ \> = \> $\max\limits_{l \in V \setminus \{l'_{ik}\}}\{d_{il}^{1} + d_{kl}^{1}\}$ \> $l''_{ik} = \argmax\limits_{l \in V \setminus \{l'_{ik}\}}\{d_{il}^{1} + d_{kl}^{1}\}$\\
\\
\num{5}\>$\forall i,k\in U \ (i\neq k) \ \text{~and~} j \in V$ \> $d_{ikj}^{2}$ \> = \>$ \begin{cases}
{d_{ik}^{2}}\!', & \text{if~} j \neq l'_{ik}\\
{d_{ik}^{2}}\!'', & \text{if~} j = l'_{ik}
\end{cases}$ \>
$l_{ik} =\begin{cases}
l'_{ik}, & \text{if~} j \neq l'_{ik}\\
l''_{ik}, & \text{if~} j = l'_{ik}
\end{cases}$\\
\\
\num{6}\>$\forall i \in U \text{~and~} j \in V$ \> $d_{ij}^{3}$ \> = \> $ \max\limits_{k \in U \setminus \{i\}}\{d_{ikj}^{2} + d_{kj}^{1}\} $ \> $k_{ij} = \argmax\limits_{k \in U \setminus \{i\}}\{d_{ikj}^{2} + d_{kj}^{1}\}$\\
\\
\num{7}\>$\forall i \in U \text{~and~} j \in V$ \> $\text{if}~ d_{ij}^{3}- d_{ij}^1 > 2$ \text{~then add cut~} $y_{il} + y_{kj} + y_{kl} \leq 2 + y_{ij}$\\
\> \> $(\text{~for~} k=k_{ij} \text{~and~} l= l_{i k_{ij}})$ \\
\\
\num{8}{\bf end procedure}
\end{tabbing}
\end{algorithm}

Algorithm~\ref{alg:separation} works with a linear relaxation as input. Its main objective is to find the {\em most violated constraint \eqref{p4}} for each pair $(i,j)$ of vertices, and then add it to the model. The idea is to use an auxiliary complete bipartite graph $G'(U,V,E)$ where each edge $ij$ has a nonnegative weight $w_{ij}$; the weights are defined according the values $y^*_{ij}$ obtained by the linear relaxation, i.e., $w_{ij}=y^*_{ij}$. Note that an edge may have a zero weight.

After the construction of $G'$, a dynamic programming approach is used to find the constraints. It calculates the values $d_{ij}^{s}$, where $d_{ij}^{s}$ is the {\em maximum cost} between $i$ and $j$ considering paths with $s-1$ internal vertices. This is explained below in detail.

In line 2, $d_{ij}^{1}$ is initialized with the value of the linear relaxation $y_{ij}^*$. In line 3, for each pair $(i,k)$ of vertices, the maximum cost ${d_{ik}^{2}}\!'$ between them considering paths with a single internal vertex is calculated; also, the internal vertex by which such a cost is achieved is saved in variable $l'_{ik}$. Line 4 is similar to line 3, but instead of calculating the maximum cost, it calculates the second maximum cost. Line 5 verifies, for all $i,k \in U$ $(i\neq k)$ and $j \in V$, if $j$ belongs to the maximum cost path, and chooses to use the maximum cost path or the second maximum cost path. In this case, $d_{ikj}^{2}$ represents the maximum cost between $i$ and $k$ using a path that avoids $j$, and $l_{ik}$ stores the corresponding internal vertex. Line 6 calculates the maximum cost between $i$ and $j$ using two internal vertices, and stores it in $d_{ij}^{3}$; it represents the ``$P_4$ of maximum cost''; variable $k_{ij}$ saves the internal vertex $k$. Finally, in line 7,  for each pair $(i,j)$, if the constant is violated ($d_{ij}^{3} - d_{ij}^{1} > 2$) then the cut $y_{il} + y_{kj} + y_{kl} \leq 2 + y_{ij}$ for $k=k_{ij}$ and $l= l_{i k_{ij}}$ is added to the model.

\subsection{Preprocessing Procedure}\label{sec:pre}

In this section, we propose a preprocessing procedure to fix variables and/or generate new constraints to the \gls{bgep}. The procedure is a direct application of \autoref{teo:distance}, presented below. New generated constraints will be added to the Branch-and-Cut algorithm.

\begin{theorem}\label{teo:distance}
Let $a,b$ be vertices of a bipartite graph $ G(V,U,E)$, and let $d(a,b)$ be the distance between $a$ and $b$ in $G$. If $d(a,b)\geq 4$ then there is an optimal solution in which $a,b$ belong to distinct biclusters.
\end{theorem}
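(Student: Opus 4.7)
I plan to prove \autoref{teo:distance} by an exchange argument. Fix any optimal solution $S^*$; if $a$ and $b$ already lie in distinct biclusters there is nothing to show, so suppose they share the same bicluster $B=U_B\cup V_B$ of $S^*$. I will modify $S^*$ into a new partition $S'$ in which $a$ and $b$ are separated and $\mathrm{cost}(S')\le\mathrm{cost}(S^*)$; by optimality of $S^*$, $S'$ is then itself optimal, establishing the existence claim. The construction splits into cases according to the sides of $a$ and $b$.

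The easier case is when $a,b$ lie on the same side, say $a,b\in U$. Because distances between same-side vertices in a bipartite graph are even, $d(a,b)\ge 4$ simplifies to $N(a)\cap N(b)=\emptyset$, so $|V_B\cap N(a)|+|V_B\cap N(b)|\le |V_B|$. Hence at least one of the two sets, say $V_B\cap N(a)$, has at most $|V_B|/2$ elements. I then split off $a$ as a singleton bicluster, leaving $b$ inside $B\setminus\{a\}$. A direct tally of the newly required edge deletions (the edges of $G$ from $a$ into $V_B$) and the saved edge additions (the non-edges from $a$ into $V_B$) gives a net change of $2|V_B\cap N(a)|-|V_B|\le 0$.

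In the case $a\in U$, $b\in V$, the $a$--$b$ distance is odd, so $d(a,b)\ge 4$ forces $d(a,b)\ge 5$. This not only excludes the edge $ab$ but also forbids any $G$-edge between $N(a)\subseteq V$ and $N(b)\subseteq U$ (such an edge would give an $a$--$b$ path of length three). Writing $\alpha=|U_B\cap N(b)|$, $\beta=|V_B\cap N(a)|$, $P=|U_B|$, $Q=|V_B|$, I first try the singleton moves: splitting off $a$ gives net change $2\beta-Q$, which handles $\beta\le Q/2$, and symmetrically splitting off $b$ handles $\alpha\le P/2$. The only remaining situation is $\alpha>P/2$ and $\beta>Q/2$, which I address with the bi-split
\[ B_a=(U_B\setminus N(b))\cup(V_B\cap N(a)),\qquad B_b=(U_B\cap N(b))\cup(V_B\setminus N(a)), \]
placing $a$ in $B_a$ and $b$ in $B_b$ since $ab\notin E$ implies $a\notin N(b)$ and $b\notin N(a)$.

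The main technical obstacle is the bookkeeping for this bi-split. The decisive structural input is that the forbidden-edge condition makes $U_B\cap N(b)$ and $V_B\cap N(a)$ induce no $G$-edges, so the only $G$-edges becoming cross-bicluster lie between $U_B\setminus N(b)$ and $V_B\setminus N(a)$ and are at most $(P-\alpha)(Q-\beta)$ in number. Substituting this bound into the change-of-objective formula, which combines the new inter-bicluster deletions with the saved intra-bicluster additions, reduces the desired inequality $\mathrm{cost}(S')\le\mathrm{cost}(S^*)$ to the clean condition $\alpha/P+\beta/Q\ge 1$, precisely what the assumption $\alpha>P/2$, $\beta>Q/2$ guarantees. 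Assembling the three subcases then completes the opposite-side case, and with it the theorem.
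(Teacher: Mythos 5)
Your proposal is correct, and it takes a genuinely different route from the paper's proof. Both arguments are exchanges on an optimal solution whose bicluster $B$ contains $a$ and $b$, but the paper cases on the exact distance ($d(a,b)>4$ versus $d(a,b)=4$ on the same side, plus the opposite-side case), builds the alternative biclusters explicitly out of the neighborhood strata $N_a$, $N_b$, $N^2_a$, $N^2_b$ and a connecting path $P$, and compares two itemized cost expressions $c_1$ and $c_2$ term by term. You instead case only on which sides $a,b$ occupy and work purely with neighbor counts inside $B$: on the same side, $d(a,b)\geq 4$ gives $N(a)\cap N(b)=\emptyset$, so by pigeonhole one of the two has at most $|V_B|/2$ neighbors in $V_B$ and the singleton split has net change $2|V_B\cap N(a)|-|V_B|\leq 0$; on opposite sides, $d(a,b)\geq 5$ kills both the edge $ab$ and every edge between $N(a)$ and $N(b)$, the two singleton splits dispose of $\alpha\leq P/2$ or $\beta\leq Q/2$, and your bi-split handles the remaining case via $2e(A_1,C_2)-\,(P-\alpha)(Q-\beta)-\alpha\beta\leq (P-\alpha)(Q-\beta)-\alpha\beta\leq 0$, which is exactly the condition $\alpha/P+\beta/Q\geq 1$. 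I checked these net-change formulas and they are right; the only pairs whose edit status changes are those internal to $B$, so the local accounting suffices. What your version buys is brevity and verifiability: the inequalities are explicit, the same-side case is unified rather than split into $d=4$ and $d>4$, and you avoid the paper's comparison of two partial cost sums whose terms must be matched up by inspection. What the paper's version buys is structural information --- it exhibits concretely which biclusters ($\{a\}\cup N^2_a$ with $N_a$, etc.) realize the cheaper solution, which supports the figures and the subsequent variable-fixing rule. Note that your text is a plan rather than a fully written proof (the ``direct tally'' and the ``change-of-objective formula'' are asserted, not derived), so to be self-contained you should display the computation showing that the only affected pairs are those inside $B$ and that the bi-split's change equals $2e(A_1,C_2)+2e(A_2,C_1)-(P-\alpha)(Q-\beta)-\alpha\beta$ with $e(A_2,C_1)=0$; but no idea is missing.
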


\begin{proof}
The proof consists of showing that, when $d(a,b) \geq 4$, the cost of keeping $a$ and $b$ in the same bicluster is greater than or equal to the cost of keeping them in distinct biclusters. The following notation is useful for the proof. Let $X \subseteq V$ and $Y \subseteq U$. Let $rem(X,Y)= |\{ xy \in E \mid x\in X ~\text{and}~ y\in Y\}|$; informally, $rem(X,Y)$ is the cost of removing all edges in $E$ between $X$ and $Y$. Also, let $add(X,Y)= |X||Y|-rem(X,Y)$, i.e., $add(X,Y)$ is the cost of adding all the missing edges between $X$ and $Y$ in order to create a bicluster $B$ with vertex set $V(B)=X\cup Y$. If $X=\{x\}$, we simply write $add(x,Y)$ and $rem(x,Y)$ instead of $add(\{x\},Y)$ and $rem(\{x\},Y)$, and similarly if $Y=\{y\}$. Denote by $N(a)$ the neighborhood of $a$, and let $N^2(a) = \{v \in V \cup U \mid d(a,v) = 2\}$.

For the case $d(a,b) = \infty$, note that $a$ and $b$ lie in distinct connected components of $G$, and therefore will belong to distinct biclusters in any optimal solution. Now assume that $d(a,b)<\infty$ and there is an optimal solution $G^*$ in which vertices $a,b$ belong to a bicluster $B$ with vertex set $V(B)=X\cup Y$, where $X \subseteq V$ and $Y \subseteq U$. We analyze two cases: $a,b\in X$ (Case 1) and $a\in X, b\in Y$ (Case 2). Case 1 is divided in two sub-cases: $d(a,b)>4$ (Case 1a) and $d(a,b) = 4$ (Case 1b).

Let $N_a = N(a)\cap V(B)$, $N_b = N(b)\cap V(B)$, $N^2_a = N^2(a)\cap V(B)$, and $N^2_b = N^2(b)\cap V(B)$.

\begin{figure}[htb]
\myps
\centering
\subfloat[Case 1a]
{\includegraphics[width=0.3\textwidth]{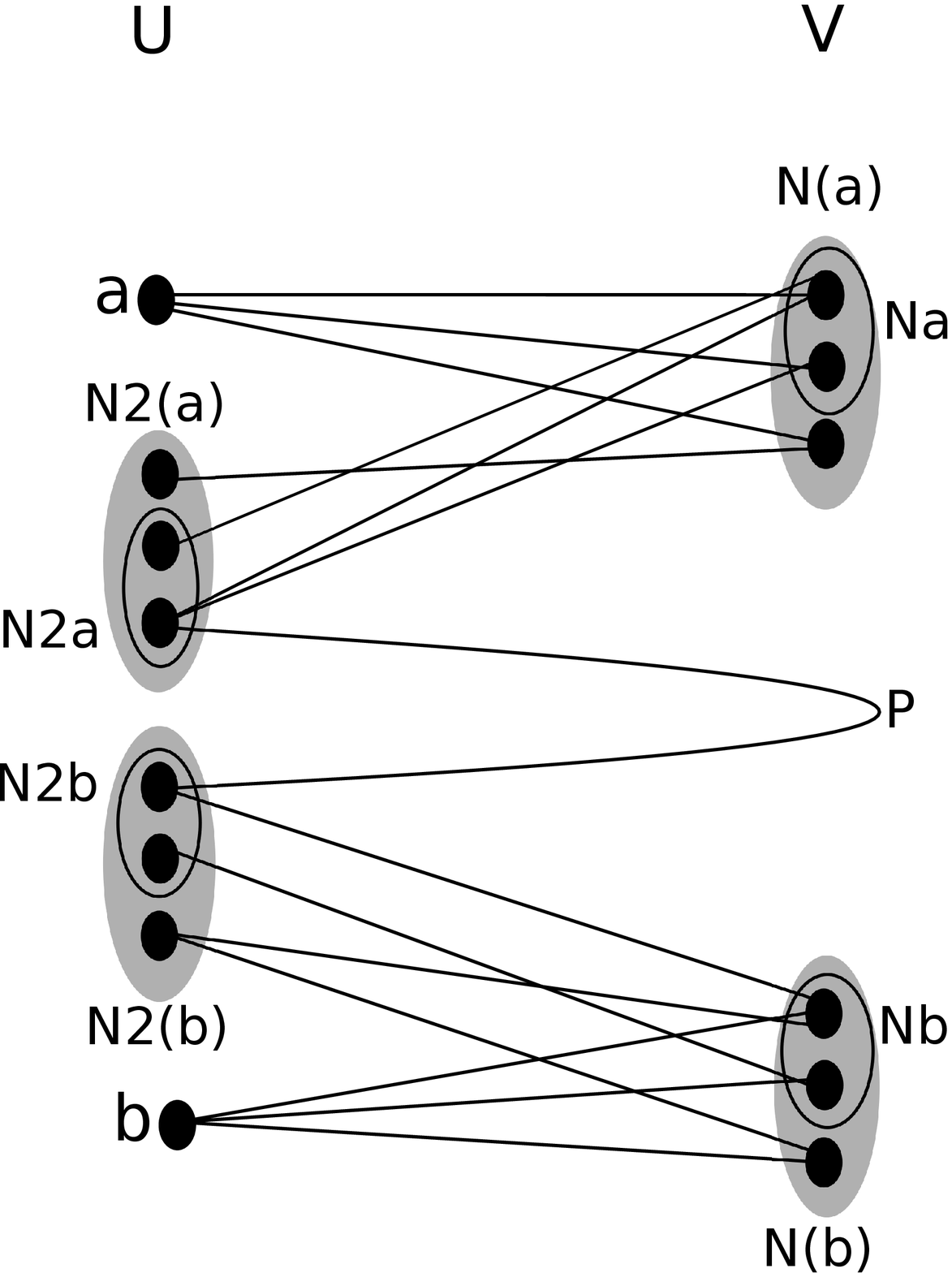}\label{fig:case1a}}\hspace{0.1\textwidth}
\subfloat[Case 1b]
{\includegraphics[width=0.3\textwidth]{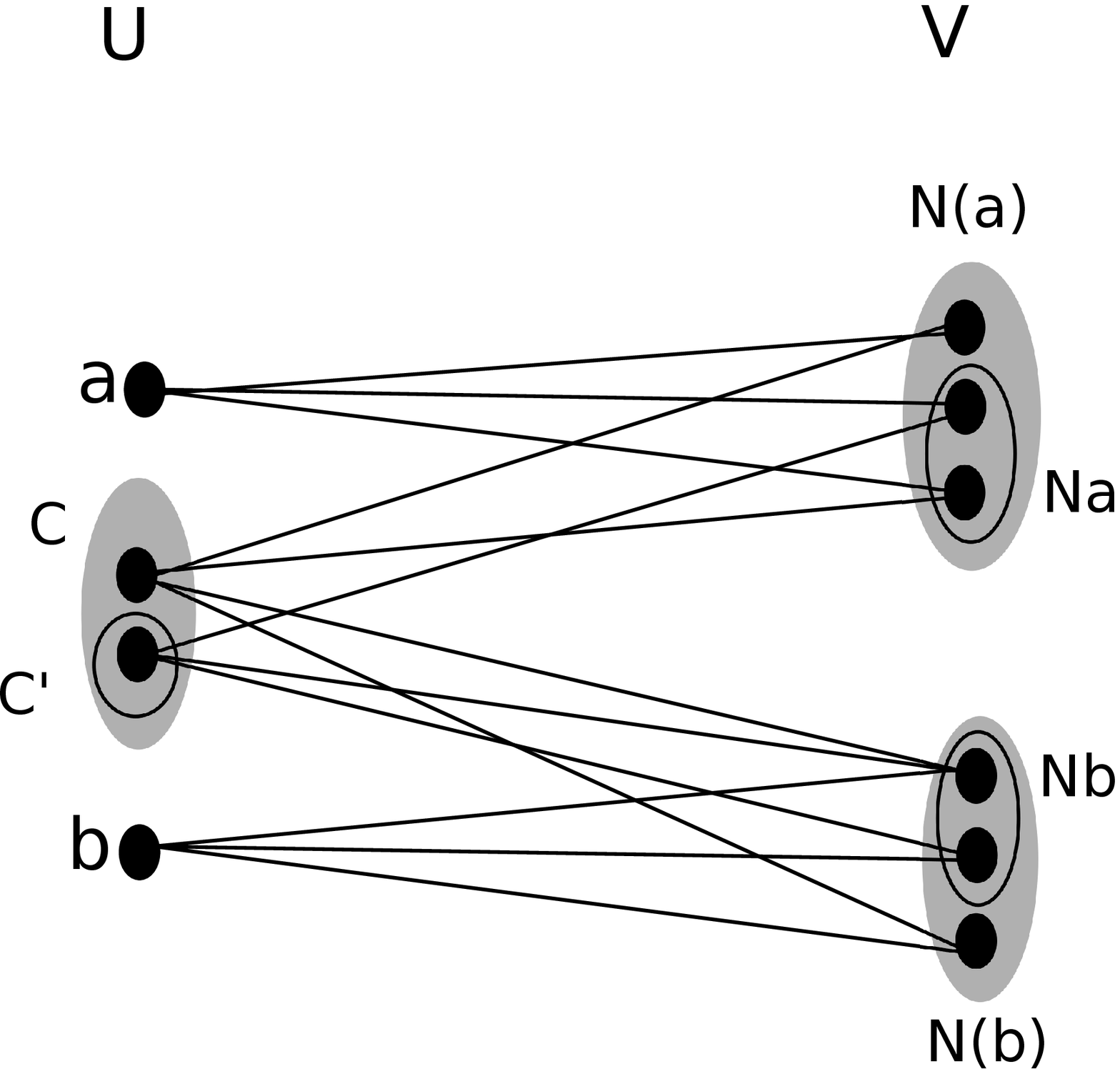}\label{fig:case1b}} \\
\subfloat[Case 2]
{\includegraphics[width=0.3\textwidth]{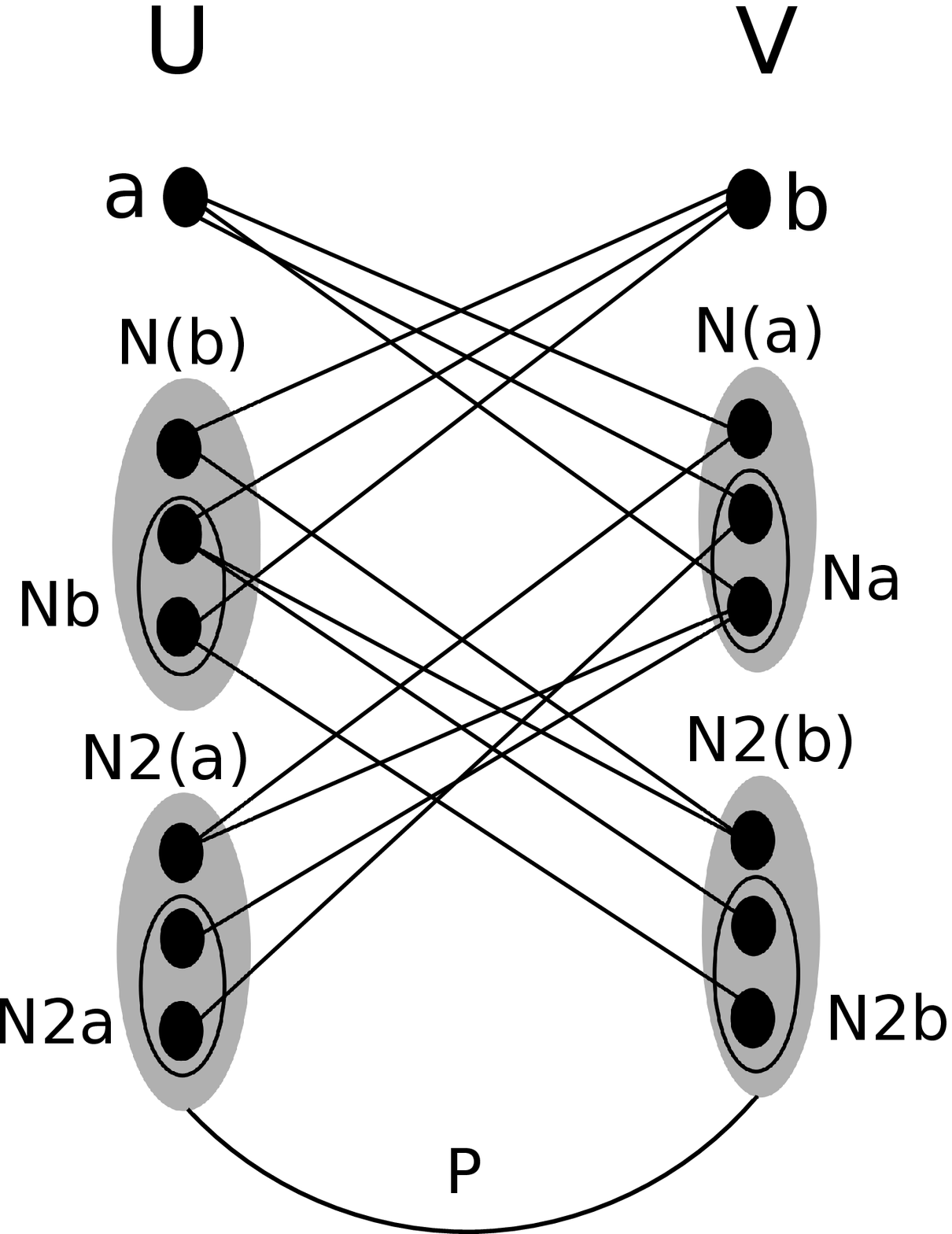}\label{fig:case2}}
\caption{Cases of Theorem~\ref{teo:distance}} \label{fig:tes}
\end{figure}

\medskip

\noindent {\bf Case 1a:} \ In this case, note that $N^2(a) \cap N^2(b) = \emptyset$. Figure~\ref{fig:case1a} represents this situation, where $P$ is a path between $N^2(a)$ and $N^2(b)$. Let $X'\subseteq X$ and $Y'\subseteq Y$ defined as follows:

$$X'= \{a,b\} \cup N^2_a \cup N^2_b \cup (V(P) \cap U) ~\text{and}~ Y'= N_a \cup N_b \cup (V(P) \cap V),$$

\noindent where $V(P)$ is the subset of vertices in path $P$. Since $X'$ and $Y'$ are completely connected by edges in $G^*$, the following cost $c_1$ is associated with the structure represented in Figure~\ref{fig:case1a}:

\begin{small}
\[\begin{array}{lll}
c_1 &=& rem(a,N(a)\setminus N_a) + rem(N_a, N^2(a) \setminus N^2_a) \ + \ rem(b,N(b)\setminus N_b) + rem(N_b, N^2(b) \setminus N^2_b) \ +\\
\\
& &add(N^2_a,N_a) + add(N^2_b,N_b) + rem(N^2_a, V \setminus Y') \ + \ rem(N^2_b, V \setminus Y') \ +\\
\\
& &add(a,N_b) + add(b,N_a) + add(N^2_a,N_b) + add(N^2_b, N_a) \ +\\
\\
& &add(X', V(P) \cap V) + add(V(P) \cap U,N_a \cup N_b ).
\end{array}\]
\end{small}

Now consider the following subsets: $X'_1= \{a\}\cup N^2_a, \ Y'_1= N_a, \ X'_2= \{b\}\cup N^2_b, \ Y'_2= N_b$. Let $G^{**}$ be another solution with distinct biclusters $B_1$ and $B_2$ such that $X'_1 \cup Y'_1 \subseteq V(B_1)$ and $X'_2 \cup Y'_2 \subseteq V(B_2)$. The cost $c_2$ associated with this new situation is given by:

\begin{small}
\[\begin{array}{lll}
c_2 &=& rem(a,N(a)\setminus N_a) + rem(N_a, N^2(a) \setminus N^2_a) \ + \ rem(b,N(b)\setminus N_b) + rem(N_b, N^2(b) \setminus N^2_b) \ +\\
\\
& &add(N^2_a,N_a) + add(N^2_b,N_b) + rem(N^2_a, V \setminus Y') \ + \ rem(N^2_b, V \setminus Y').
\end{array}\]
\end{small}

Since $c_1\geq c_2$, the cost of keeping $a$ and $b$ in distinct biclusters is not greater than the cost of keeping them in the same bicluster.

\medskip

\noindent {\bf Case 1b:} \ If $d(a,b)=4$ then $a$ and $b$ belong to the same part, say $U$, and $N^2(a) \cap N^2(b) = C\neq\emptyset$. Let $C'=C\cap V(B)$. Figure~\ref{fig:case1b} depicts the subsets involved in Case 1b.

Let $X'\subseteq X$ and $Y'\subseteq Y$ defined as $X'= \{a,b\} \cup C'$ and $Y'= N_a \cup N_b.$ Again, since $X'$ and $Y'$ are completely connected by edges in $G^*$, the cost $c_1$ associated with the structure in Figure~\ref{fig:case1b} is given by:

\begin{small}
\[\begin{array}{lll}
c_1 &=& add(b,N_a) + add(a,N_b) + rem(N_a,N^2(a)\setminus C') + rem(N_b,N^2(b) \setminus C') + add(C',N_a) + add(C',N_b) \ +\\
\\
& &rem(a,N(a)\setminus N_a) + rem(b,N(b)\setminus N_b) + rem(C',V \setminus (N_a \cup N_b)).
\end{array}\]
\end{small}

Assume without loss of generality that $|N_b|\leq |N_a|$. Consider now the subsets $X'_1= \{a\}\cup C', \ Y'_1= N_a\cup N_b, \ X'_2= \{b\}, \ Y'_2=\emptyset$. If $G^{**}$ is another solution with distinct biclusters $B_1$ and $B_2$ such that $X'_1 \cup Y'_1 \subseteq V(B_1)$ and $X'_2 \cup Y'_2 \subseteq V(B_2)$, the cost $c_2$ associated with $G^{**}$ is:

\begin{small}
\[\begin{array}{lll}
c_2 &=& add(a,N_b) + rem(N_a,N^2(a)\setminus C') + rem(N_b,N^2(b) \setminus C') + add(C',N_a) + add(C',N_b) \ +\\
\\
& &rem(a,N(a)\setminus N_a) + rem(C',V \setminus (N_a \cup N_b)) + rem(b,N(b)).
\end{array}\]
\end{small}

The difference of the costs is given by:

\begin{small}
\[\begin{array}{lll}
c_1-c_2 &=& add(b,N_a) + rem(b,N(b)\setminus N_b) - rem(b,N(b))\\
\\
&=&|N_a| + |N(b)| - |N_b| - |N(b)| = |N_a| - |N_b|.
\end{array}\]
\end{small}

Therefore $c_1-c_2\geq 0$, i.e., keeping $a$ and $b$ in distinct biclusters is not more costly.

\medskip

\noindent {\bf Case 2:} \ In this case, $a$ and $b$ belong to different parts. Assume $a\in U$ and $b\in V$, as shown in Figure~\ref{fig:case2}. Let $X'\subseteq X$ and $Y'\subseteq Y$ defined as $X'=\{a\} \cup N_b \cup N_a^2$ and $Y'= \{b\} \cup N_a \cup N^2_b$. The cost $c_1$ corresponding to the biclique with vertex set $X'\cup Y'$ is:

\begin{small}
\[\begin{array}{lll}
c_1 &=& add(a,b) + add(a,N^2_b) + add(b,N^2_a) + add(N_a,N_b) + add(N_a,N^2_a) + add(N_b,N^2_b) + add(N^2_a,N^2_b) \ +\\
\\
& &add(X', V(P) \cap V) + add(V(P) \cap U, Y') + rem(a,N(a)\setminus N_a) + rem(b,N(b)\setminus N_b) \ +\\
\\
& &rem(N^2_a,V \setminus  N_a ) + rem(N^2_b,U \setminus N_b) + rem(N_a,N^2(a)\setminus N^2_a) + rem(N_b,N^2(b)\setminus N^2_b).
\end{array}\]
\end{small}

Now let $X'_1= \{a\}\cup N^2_a, \ Y'_1= N_a, \ X'_2= N_b, \ Y'_2=\{b\}$. Also, let $G^{**}$ be another solution with distinct biclusters $B_1$ and $B_2$ such that $X'_1 \cup Y'_1 \subseteq V(B_1)$ and $X'_2 \cup Y'_2 \subseteq V(B_2)$. The cost $c_2$ associated with $G^{**}$ is:

\begin{small}
\[\begin{array}{lll}
c_2 &=& rem(a,N(a)\setminus N_a) + rem(b,N(b)\setminus N_b) + rem(N_a,N^2(a)\setminus N^2_a) + rem(N_b,N^2(b)\setminus N^2_b) \ +\\
\\
& &rem(N^2_a,V \setminus  N_a) + rem(N^2_b,U \setminus N_b) + add(N_a,N^2_a) + add(N_b,N^2_b).
\end{array}\]
\end{small}

Again, the separation into two biclusters is not more costly.

\medskip

For each case above, we have shown that there is another solution in which $a$ and $b$ belong to distinct biclusters and whose cost is not greater. Then the theorem follows.
\end{proof}

Based on the above theorem, after computing the distance between each pair of vertices, variables can be fixed and cuts can be generated as follows: if vertices $i$ and $j$ are in different parts and $d(i,j)>3$ then variable $y_{ij}$ is set to 0. Otherwise, $i$ and $j$ are in the same part, say $U$, and the cut $y_{ik} + y_{jk} \leq 1 $ will be generated for every $k \in V$.

\subsection{Instances for the BGEP}\label{sec:instances_bgep}

To the best of our knowledge, no public sites contain instances for the \gls{bgep}. Thus, to evaluate the algorithms proposed in this work, we create random bipartite instances using the $\mathbb G(m,n,p)$ model, also known as binomial model, which is a particular case of the model proposed by \citet{Gilbert1959}. A bipartite instance $G(U,V,E)$ is created such that $|U| = m$, $|V| = n$, and each potential edge of $E$ is created with probability $p$, independently of the other edges.

\section{Application to Manufacturing Cell Formation}\label{sec:cell}

The input of the \gls{mcfp} is given as a binary product-machine matrix where each entry $(i,j)$ has value 1 if product $j$ is manufactured by machine $i$, and 0 otherwise. Any feasible solution of the \gls{mcfp} consists of a collection of product-machine cells, where every product (or machine) is allocated to exactly one cell. Hence, in each cell $C$, machines allocated to it are exclusively dedicated to manufacture products in $C$. In an ideal solution of the \gls{mcfp}, in each cell there must be a high similarity between the products and machines allocated to it. \autoref{fig:cell_example} shows an example of the \gls{mcfp} solved as a block diagonalization problem. Note that a solution for the \gls{mcfp} is obtained by a permutation of rows/columns of the input matrix together with a cell assignment for products and machines. In \autoref{fig:cell_example}, products $P_1, P_3, P_7$ and machines $M_2, M_3, M_5$ are gathered to form a cell, while the remaining products/machines form another cell.

\begin{figure}[hbt]
\begin{minipage}[b]{0.48\linewidth}\centering
 \begin{tabular}{c!{\VRule[2pt]}ccccc!{\VRule[2pt]}}
\multicolumn{1}{c}{} & $M_1$ & $M_2$ & $M_3$ & $M_4$ & \multicolumn{1}{c}{$M_5$}\\ \cmidrule[2pt]{2-6}
$P_1$ &  0 & 1 & 1 & 0 & 1 \\
$P_2$ &  1 & 0 & 0 & 1 & 0 \\
$P_3$ &  0 & 1 & 1 & 0 & 0 \\
$P_4$ &  1 & 0 & 0 & 1 & 0 \\
$P_5$ &  1 & 0 & 0 & 0 & 1 \\
$P_6$ &  1 & 0 & 1 & 1 & 0 \\
$P_7$ &  0 & 0 & 1 & 0 & 1 \\
  \cmidrule[2pt]{2-6}
 \end{tabular}
 \end{minipage}
$\Rightarrow$
\begin{minipage}[b]{0.48\linewidth}\centering
\begin{tabular}{cccc!{\VRule[2pt]}cc}
\multicolumn{1}{c}{} & $M_2$ & $M_3$ & \multicolumn{1}{c}{$M_5$} & $M_1$ & \multicolumn{1}{c}{$M_4$}\\  \cmidrule[2pt]{2-4}
\multicolumn{1}{c!{\VRule[2pt]}}{$P_1$} &  1 & 1 & 1 & 0 & 0 \\
\multicolumn{1}{c!{\VRule[2pt]}}{$P_3$} &  1 & 1 & 0 & 0 & 0 \\
\multicolumn{1}{c!{\VRule[2pt]}}{$P_7$} &  0 & 1 & 1 & 0 & 0 \\ \cmidrule[2pt]{2-6}
$P_2$ &  0 & 0 & 0 & 1 & \multicolumn{1}{c!{\VRule[2pt]}}{1} \\
$P_4$ &  0 & 0 & 0 & 1 & \multicolumn{1}{c!{\VRule[2pt]}}{1} \\
$P_5$ &  0 & 0 & 1 & 1 & \multicolumn{1}{c!{\VRule[2pt]}}{0} \\
$P_6$ &  0 & 1 & 0 & 1 & \multicolumn{1}{c!{\VRule[2pt]}}{1} \\ \cmidrule[2pt]{5-6}
 \end{tabular}
 \end{minipage}
 \caption{MCFP example.}
 \label{fig:cell_example}
\end{figure}

Among several measures of performance used as objective functions for the \gls{mcfp}, the most used in literature is the {\em grouping efficacy} $\mu$, defined as:

\label{sec:of_cell}
\begin{equation} \label{eq:of_cell}
 \mu = \frac{N_1 - N^{out}_{1}}{N_1 + N^{in}_{0}},
\end{equation}
 where $N_1$ is the total number of 1's in the input matrix, and $N^{out}_{1}$ ($N^{in}_{0}$) is the total number of 1's outside (respectively, inside) diagonal blocks in the solution matrix.

\subsection{The size of the cells}

Some works define a minimum value for the size of the cells. For instance, in  \citep{Chandrasekharan1987,Srinivasan1991,Goncalves2004}, cells with less than two products or machines are not allowed; such cells are called \textit{singletons}. However, there is no consensus with respect to the size of the cells. Others studies do not consider any size constraint, allowing the existence of empty cells, such as the work by \citet{Pailla2010}. An example of a solution with an empty cell is shown in \autoref{fig:ex_empty}.

This work deals with two versions of the \gls{mcfp} found in the literature:
\begin{enumerate}
\item unrestricted version, allowing singletons and empty cells;
\item with restrictions (minimum size $2 \times 2$ for each cell).
\end{enumerate}

\begin{figure}[hbt]
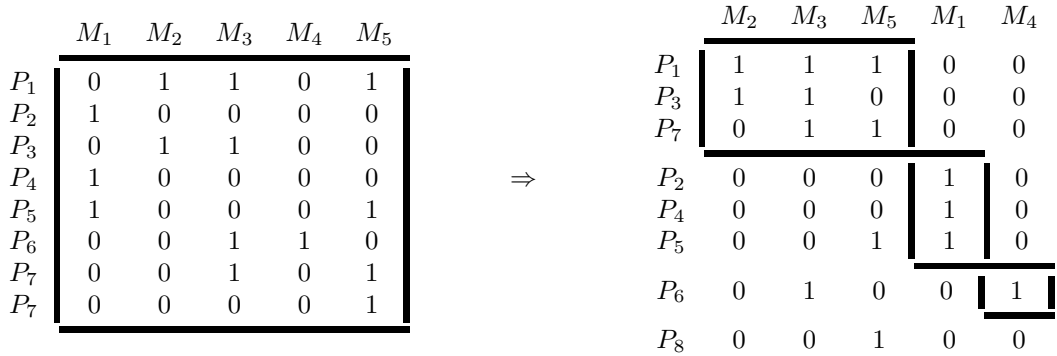

\begin{minipage}[b]{0.48\linewidth}\centering
 \begin{tabular}{c!{\VRule[2pt]}ccccc!{\VRule[2pt]}}
\multicolumn{1}{c}{} & $M_1$ & $M_2$ & $M_3$ & $M_4$ & \multicolumn{1}{c}{$M_5$}\\ \cmidrule[2pt]{2-6}
$P_1$ &  0 & 1 & 1 & 0 & 1 \\
$P_2$ &  1 & 0 & 0 & 0 & 0 \\
$P_3$ &  0 & 1 & 1 & 0 & 0 \\
$P_4$ &  1 & 0 & 0 & 0 & 0 \\
$P_5$ &  1 & 0 & 0 & 0 & 1 \\
$P_6$ &  0 & 0 & 1 & 1 & 0 \\
$P_7$ &  0 & 0 & 1 & 0 & 1 \\
$P_7$ &  0 & 0 & 0 & 0 & 1 \\
  \cmidrule[2pt]{2-6}
 \end{tabular}
 \end{minipage}
$\Rightarrow$
\begin{minipage}[b]{0.48\linewidth}\centering
\begin{tabular}{cccc!{\VRule[2pt]}cc}
\multicolumn{1}{c}{} & $M_2$ & $M_3$ & \multicolumn{1}{c}{$M_5$} & $M_1$ & \multicolumn{1}{c}{$M_4$}\\  \cmidrule[2pt]{2-4}
\multicolumn{1}{c!{\VRule[2pt]}}{$P_1$} &  1 & 1 & 1 & 0 & 0 \\
\multicolumn{1}{c!{\VRule[2pt]}}{$P_3$} &  1 & 1 & 0 & 0 & 0 \\
\multicolumn{1}{c!{\VRule[2pt]}}{$P_7$} &  0 & 1 & 1 & 0 & 0 \\ \cmidrule[2pt]{2-5}
$P_2$ &  0 & 0 & 0 & 1 & \multicolumn{1}{!{\VRule[2pt]}c}{0} \\
$P_4$ &  0 & 0 & 0 & 1 & \multicolumn{1}{!{\VRule[2pt]}c}{0} \\
$P_5$ &  0 & 0 & 1 & 1 & \multicolumn{1}{!{\VRule[2pt]}c}{0} \\ \cmidrule[2pt]{5-6}
$P_6$ &  0 & 1 & \multicolumn{1}{c}{0} & \multicolumn{1}{c!{\VRule[2pt]}}{0} & \multicolumn{1}{c!{\VRule[2pt]}}{1} \\ \cmidrule[2pt]{6-6}
$P_8$ &  0 & 0 & \multicolumn{1}{c}{1} & 0 & 0 \\
 \end{tabular}
 \end{minipage}
\caption{Example with a singleton and an empty cell.}
\label{fig:ex_empty}
 \end{figure}

\subsection{Similarity between the BGEP and the MCFP}

Given an input for the \gls{mcfp}, we can define an input $G$ for the \gls{bgep} by setting $U$ as the set of products, $V$ as the set of machines, and $E$ as the set of edges such that $ij$ is an edge of $G$ if and only if the entry $(i,j)$ of the input matrix for the \gls{mcfp} has value 1. In addition, a solution for the \gls{bgep} with input $G$ can be transformed into a cell assignment for machines/products.

The two problems (the \gls{bgep} and the \gls{mcfp}) are very similar, but a point to note is that biclusters have no size limitation. Thus, we define a new \gls{bgep} variant, the \gls{bgeps}, to make a more precise correspondence with the \gls{mcfp}. Informally, the \gls{bgeps} is defined by adding size restrictions to the \gls{bgep}: every bicluster in $G$ must now have at least $s_c$ vertices in $U$ and $s_r$ vertices in $V$. In the translation from the \gls{bgeps} to the \gls{mcfp}, $s_r$ is the minimum cell size for rows and $s_c$ the minimum cell size for columns.

We propose the following formulation for the \gls{bgeps}:
\begin{align}
  \min 		&\quad \sum_{+(ij)}{(1-y_{ij})} + \sum_{-(ij)}{y_{ij}} 	&\\
  st	 	&\quad  y_{il} + y_{kj} + y_{kl} \leq 2 + y_{ij}		&\forall i,k \in U ~and~ j,l \in V \\
\label{tamc}	&\quad \sum_{j\in V}{y_{ij}} \geq s_r			& \forall i \in  U \\
\label{taml}	&\quad \sum_{i\in U}{y_{ij}} \geq s_c			& \forall j \in  V \\
		&\quad y_{ij} \in \{0,1\} 					&\forall i \in U ~and~ j \in V.
\end{align}
Lemma~\ref{lem:solution} makes the correspondence between the \gls{bgeps} and the \gls{mcfp}.

\begin{lemma}\label{lem:solution}
There is a one-to-one correspondence from the feasible solution set of the \gls{bgeps} to the feasible solution set of the \gls{mcfp}. In addition, for every pair of corresponding feasible solutions, the sizes of biclusters and cells are preserved.
\end{lemma}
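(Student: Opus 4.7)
My plan is to exhibit explicit mutually inverse maps $\phi$ and $\psi$ between the two feasible solution sets. Given a feasible \gls{bgeps} solution $y$, I let $G_y = (U, V, \{ij : y_{ij}=1\})$ be the bicluster graph induced by $y$ and define $\phi(y)$ to be the \gls{mcfp} solution whose cells are the connected components of $G_y$. Conversely, given an \gls{mcfp} partition of $U \cup V$ into cells $C_1, \ldots, C_k$, I let $\psi$ produce the matrix $y_{ij} = 1$ iff $i \in U$ and $j \in V$ lie in a common cell. The substantive content of the lemma is then that $\phi$ and $\psi$ are well-defined between the feasible sets and that they are mutually inverse.

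For well-definedness of $\phi$, the crucial step is to show that every connected component of $G_y$ is a biclique, which is precisely the content of constraints \eqref{p4}: if $y_{il}=y_{kl}=y_{kj}=1$ then $y_{ij}=1$, so the ``same component'' relation is biclique-transitive and each component is complete bipartite. Constraints \eqref{tamc}--\eqref{taml} then translate word-for-word into the minimum cell-size requirements of the \gls{mcfp}: $\sum_j y_{ij}\geq s_r$ forces the component of $i$ to contain at least $s_r$ elements of $V$, and dually for \eqref{taml}. The well-definedness of $\psi$ is the symmetric argument: transitivity of the ``same cell'' relation yields \eqref{p4}, and the cell-size bounds yield \eqref{tamc}--\eqref{taml}, with integrality automatic.

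The inverse identities $\psi \circ \phi = \mathrm{id}$ and $\phi \circ \psi = \mathrm{id}$ then follow from the defining fact that two vertices of $G_y$ share a connected component if and only if they share a biclique. Size preservation is automatic: a biclique with $p$ vertices in $U$ and $q$ vertices in $V$ is mapped to a cell with $p$ products and $q$ machines. The one delicate point, and the main obstacle I anticipate, is the treatment of isolated vertices and the corresponding singleton cells: following the convention set after \autoref{fig:ex} that a single vertex is itself a bicluster, each such vertex becomes a singleton cell on the \gls{mcfp} side and vice versa, so the bijection extends cleanly across whichever parameter regime $(s_c, s_r)$ is under consideration.
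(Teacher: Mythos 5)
Your proposal is correct and follows essentially the same route as the paper: the paper likewise defines the map (and its inverse) by ``$i$ and $j$ share a bicluster iff product $i$ and machine $j$ share a cell'' and reads off the size correspondence directly. If anything, you are more explicit than the paper about well-definedness --- deriving biclique components from constraints \eqref{p4} and matching \eqref{tamc}--\eqref{taml} to the cell-size bounds --- which the paper's proof leaves implicit.
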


\begin{proof}
Consider an instance of the \gls{bgeps} consisting of a bipartite graph $G$ with parts $U$ and $V$. It is easy to see that it corresponds to an instance $M$ of the \gls{mcfp}: Figures~\ref{fig:tras:ins_bgep} and~\ref{fig:tras:ins_mcfp} show an example of a bipartite graph $G$ and a corresponding product-machine matrix $M$. Let $\mathcal B$ and $\mathcal C$ be the feasible solution sets of the \gls{bgeps} and the \gls{mcfp}, respectively, associated with $G$ and $M$.

Let $f : \mathcal B \rightarrow \mathcal C$ be the transformation of a solution $G'$ in $\mathcal B$ to a solution $M'$ in $\mathcal C$ defined as follows. If vertices $i\in U$ and $j\in V$ belong to the same bicluster in $G'$ then product $i$ and machine $j$ are gathered inside the same cell in $M'$, as shown in Figures~\ref{fig:tras:sol_bgep} and~\ref{fig:tras:sol_mcfp}. Thus, each solution in $\mathcal B$ is uniquely mapped into one solution in $\mathcal C$.

Similarly, let $g : \mathcal C \rightarrow \mathcal B$ be the inverse transformation of $f$ that maps each solution $M'$ in $\mathcal C$ into a solution $G'$ in $\mathcal B$, as follows: if a product $i$ and a machine $j$ are in the same cell then the corresponding vertices $i$ and $j$ belong to the same bicluster in $G'$.

Since $f$ and $g$ are injective functions, there is a one-to-one correspondence between $\mathcal B$ and $\mathcal C$. Moreover, if $G'$ and $M'$ are corresponding feasible solutions, it is easy to see that a bicluster $B$ in $G'$ corresponds to a cell containing exactly $|V(B)\cap U|$ products, $|V(B)\cap V|$ machines, and $|E(B)|$ entries; also, a cell with $pm$ entries in $M'$ corresponds to a bicluster with exactly $p+m$ vertices and $pm$ edges.
\end{proof}

An optimal solution $G^*$ of the \gls{bgep} does not necessarily correspond to an optimal solution of the \gls{mcfp}, because the objective functions are different; however, $G^*$ corresponds to a feasible solution of the \gls{mcfp}. For example, Figure~\ref{fig:tras:sol_bgep} shows an optimal solution of the \gls{bgep}, but in Figure~\ref{fig:tras:sol_mcfp} the corresponding solution of the \gls{mcfp} is not optimal. This happens because an edge deletion, informally, corresponds to a `1' outside cells, and an edge addition corresponds to a `0' inside a cell. That is, for the \gls{bgep}, additions and deletions have the same weight, but for the \gls{mcfp}, a `0 inside' is preferable  than a `1 outside' (using the objective funtion~\eqref{eq:of_cell}). In Figure \ref{fig:tras:ins_bgep}, for instance, adding an edge between vertices $5$ and $8$ is better than deleting the edge between $4$ and $9$, in terms of the corresponding solutions of the \gls{mcfp}.

\begin{figure}[hbtp]
\myps
\centering
\subfloat[\gls{bgeps} instance.]{\includegraphics[width=0.33\textwidth]{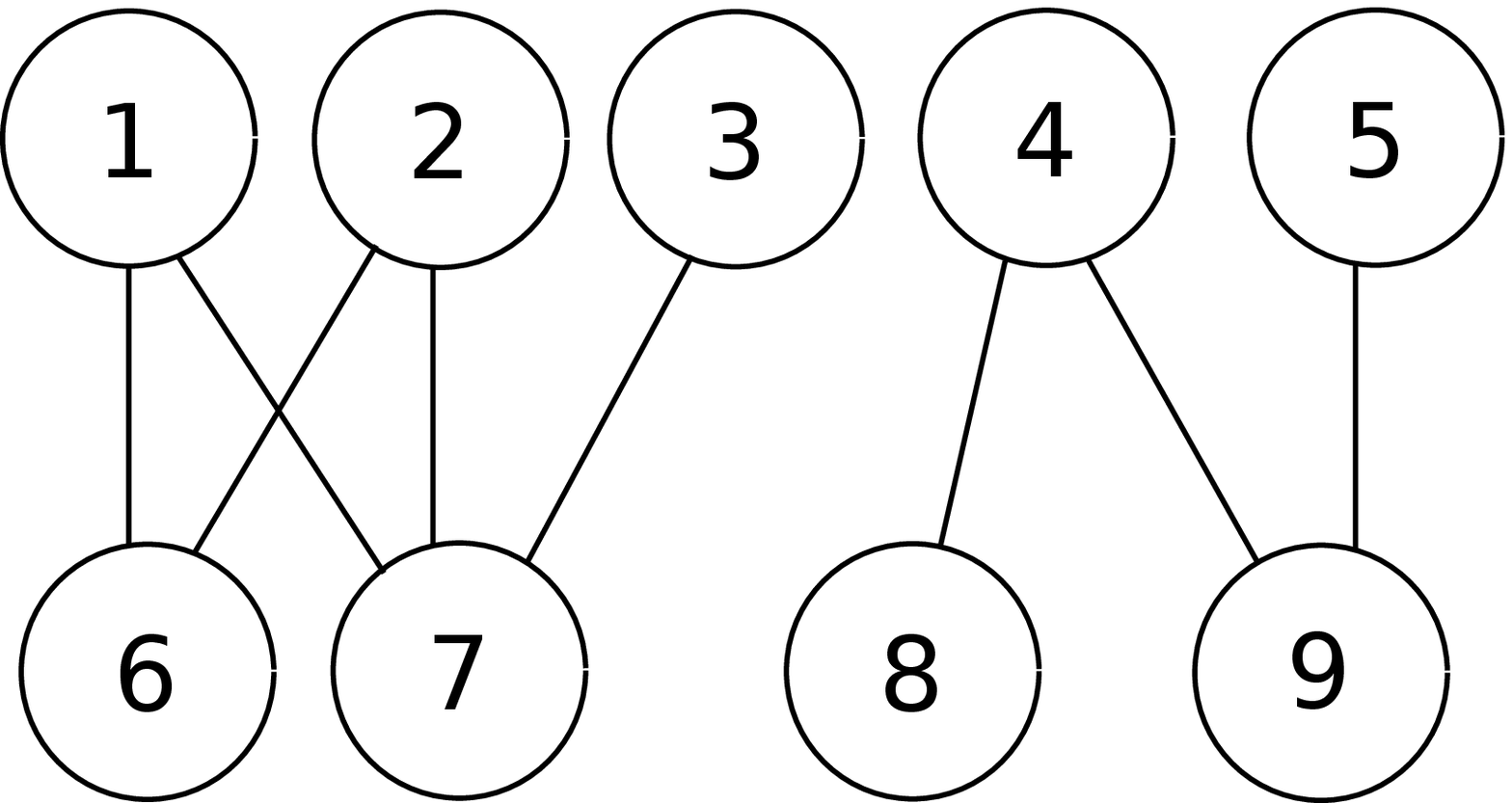}\label{fig:tras:ins_bgep}}\hspace{0.1\textwidth}
\subfloat[\gls{bgeps} solution.]{\includegraphics[width=0.33\textwidth]{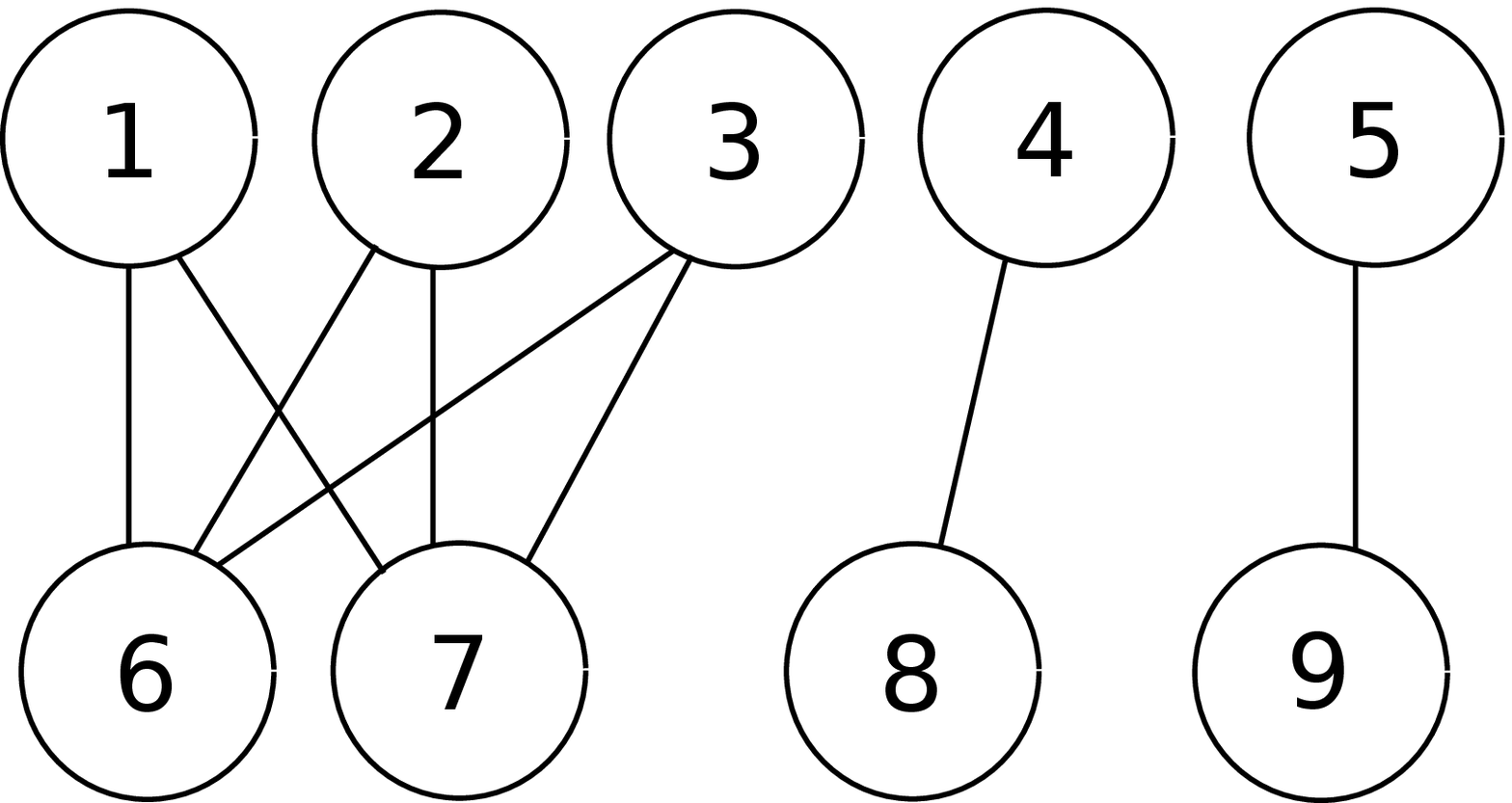}\label{fig:tras:sol_bgep}} \\
{\centering
\subfloat[\gls{mcfp} instance.]{\large
 \begin{tabular}{c!{\VRule[2pt]}cccc!{\VRule[2pt]}}
\multicolumn{1}{c}{} & $6$ & $7$ & $8$ &   \multicolumn{1}{c}{$9$}\\ \cmidrule[2pt]{2-5}
$1$ &  1 & 1 & 0 & 0 \\
$2$ &  1 & 1 & 0 & 0 \\
$3$ &  0 & 1 & 0 & 0 \\
$4$ &  0 & 0 & 1 & 1 \\
$5$ &  0 & 0 & 0 & 1 \\
\cmidrule[2pt]{2-5}
 \end{tabular}
\label{fig:tras:ins_mcfp}}\hspace*{2cm}
\subfloat[\gls{mcfp} solution.]{\large
\begin{tabular}{ccccc}
\multicolumn{1}{c}{} & $6$ &  \multicolumn{1}{c}{$7$} & $8$  & \multicolumn{1}{c}{$9$}\\  \cmidrule[2pt]{2-3}
\multicolumn{1}{c!{\VRule[2pt]}}{$1$} &  1 & \multicolumn{1}{c!{\VRule[2pt]}}{1} & 0 & 0  \\
\multicolumn{1}{c!{\VRule[2pt]}}{$2$} &  1 & \multicolumn{1}{c!{\VRule[2pt]}}{1} & 0 & 0  \\
\multicolumn{1}{c!{\VRule[2pt]}}{$3$} &  0 & \multicolumn{1}{c!{\VRule[2pt]}}{1} & 0 & 0  \\ \cmidrule[2pt]{2-4}
$4$ &  0 & \multicolumn{1}{c!{\VRule[2pt]}}{0} & \multicolumn{1}{c!{\VRule[2pt]}}{1} &  1 \\ \cmidrule[2pt]{4-5}
$5$ &  0 & 0 & \multicolumn{1}{c!{\VRule[2pt]}}{0} &  \multicolumn{1}{c!{\VRule[2pt]}}{1} \\ \cmidrule[2pt]{5-5}
 \end{tabular}
\label{fig:tras:sol_mcfp}}}
\caption{\gls{bgeps} $\leftrightarrow$ \gls{mcfp} example.} \label{fig:ex_transform}
\end{figure}

\subsection{A First Exact Algorithm for the MCFP}\label{sec:exact_algorithm}

In this section, we propose an exact iterative method for the \gls{mcfp}. In Lemma~\ref{lem:limite} we describe upper/lower bounds for the \gls{mcfp}. Next, we define a parameterized version of the \gls{bgeps} to be used in the exact algorithm.

\begin{lemma}\label{lem:limite}
Let $b^* = a^*+d^*$ be the optimal solution value of the \gls{bgep} for an instance $G$, where $a^*$ and $d^*$ are the number of edge additions and deletions, respectively, and let $M$ be the instance of the \gls{mcfp} corresponding to $G$. Then $m/(m+a^*+d^*)$ is an upper bound and $(m-d^*)/(m+a^*)$ is a lower bound for the optimal solution value of the \gls{mcfp} with input $M$, where $m=|E(G)|$.
\end{lemma}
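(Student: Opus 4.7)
The plan is to translate the MCFP objective $\mu$ into the ``BGEP coordinates'' $(a,d)$ and then derive both bounds by comparing any feasible solution to the optimal BGEP pair $(a^*,d^*)$.

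First I would use the bicluster/cell correspondence established in Lemma~\ref{lem:solution}. Under the bijection $f$, a bicluster corresponds to a diagonal block, an edge addition corresponds to a missing $1$ that is forced to become ``1 inside'' (equivalently, a $0$ inside a diagonal block in the original matrix), and an edge deletion corresponds to a $1$ that lies outside the diagonal blocks. Hence for any feasible BGEP solution using $a$ additions and $d$ deletions, the corresponding MCFP solution satisfies
\begin{equation*}
N_1 = m, \qquad N_1^{out} = d, \qquad N_0^{in} = a,
\end{equation*}
so its efficacy is $\mu(a,d) = (m-d)/(m+a)$. This identity is the whole engine of the argument; once it is in place, both bounds follow by elementary manipulations.

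For the lower bound, I would simply evaluate $\mu$ at the optimal BGEP solution. Since $(a^*,d^*)$ is feasible for the BGEP, by Lemma~\ref{lem:solution} it yields a feasible MCFP solution with efficacy $(m-d^*)/(m+a^*)$. The optimal MCFP value $\mu^*$ is at least as large, giving $\mu^* \geq (m-d^*)/(m+a^*)$.

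For the upper bound, I would take an arbitrary feasible MCFP solution, write its efficacy as $\mu(a,d) = (m-d)/(m+a)$, and show two successive inequalities. The first is the algebraic step
\begin{equation*}
\frac{m-d}{m+a} \;\leq\; \frac{m}{m+a+d},
\end{equation*}
which cross-multiplies to $-d(a+d) \leq 0$ and so holds since $a,d\geq 0$. The second uses the optimality of $b^*$: every feasible BGEP solution has $a+d \geq a^*+d^* = b^*$, whence $m/(m+a+d) \leq m/(m+a^*+d^*)$. Chaining these gives $\mu(a,d) \leq m/(m+a^*+d^*)$ for every feasible MCFP solution, so this quantity is an upper bound on $\mu^*$.

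The only place that needs a bit of care is the very first step, namely making sure the translation $N_1^{out}=d$ and $N_0^{in}=a$ is applied consistently across \emph{all} feasible MCFP solutions and not only those coming from near-optimal BGEP solutions; but this is exactly what the bijection in Lemma~\ref{lem:solution} gives, so no real obstacle arises.
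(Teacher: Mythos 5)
Your proof is correct and follows essentially the same route as the paper: translate the efficacy into BGEP coordinates via $\mu(a,d)=(m-d)/(m+a)$, establish $(m-d)/(m+a)\leq m/(m+a+d)$, and then invoke the optimality of $a^*+d^*$ for the upper bound, with the lower bound coming from feasibility of the optimal BGEP solution. The only difference is cosmetic: the paper proves the key inequality by a monotonicity (derivative) argument with $a+d=k$ fixed, whereas you cross-multiply directly, which is arguably cleaner.
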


\begin{proof}
Let $\mu(a,d) = (m-d)/(m+a)$ be the objective function of the \gls{mcfp}, where $d$ and $a$ denote, respectively, the number of ones outside cells and zeros inside cells in $M$. Consider also that $a+d=k$, for a positive constant $k$.

Taking $\mu(a,d)$ as a function $f(a)$ of $a$, we obtain that

\begin{align*}
 \mu(a,d)=\frac{m-d}{m+a} &= \frac{m-(k-a)}{m+a} = f(a).
\end{align*}

Calculating the derivative,

\begin{align*}
 \frac{df}{da} &= \frac{(m+a)-1(m-k+a)}{m^2 + 2ma + a^2} =  \frac{k}{m^2 + 2ma + a^2} > 0.
\end{align*}

Since $\frac{df}{da}> 0$ for every $a$, $f(a)$ is an increasing function. Since $k \geq a$, we have $f(k)\geq f(a)$, and thus

\begin{align*}
 \frac{m}{m+d+a} &\geq \frac{m-d}{m+a}.
\end{align*}

\noindent In other words, in the best case, the $k$ edge editing operations would correspond to $a=k$ edge additions and $d=0$ edge deletions, since as $a$ increases, $f(a)$ increases as well.

Now consider that $\mu'(a,d) = a+d$ is a feasible solution value of the \gls{bgep}. It follows that $a^*+d^* \leq a+d$ and
\begin{align*}
 \frac{m}{m+d^*+a^*} \geq \frac{m}{m+d+a} > \frac{m-d}{m+a}.
\end{align*}

Therefore, ${m}/{(m+a^*+d^*)}$ is an upper bound for the optimal solution value of the \gls{mcfp}.

Showing that $(m-d^*)/(m+a^*)$ is a lower bound is trivial since $b^* = a^*+d^*$  is a feasible solution value of the \gls{mcfp}, as shown in Lemma~\ref{lem:solution}.
\end{proof}

We now define a parameterized version of the \gls{bgeps}, the \gls{bgepsp}, which consists of finding a solution of the \gls{bgeps} with exactly $\lambda$ edge editing operations, such that the number of edge deletions is minimized. A formulation for the \gls{bgepsp} is described below:

{\allowdisplaybreaks
\begin{align}
\min &\quad \sum_{+(ij)}{(1-y_{ij})} &\\
st &\quad  y_{il} + y_{kj} + y_{kl} \leq 2 + y_{ij} &\forall i,k \in U ~and~ j,l \in V \\
&\quad \sum_{j\in V}{y_{ij}} \geq s_r & \forall i \in U \\
&\quad \sum_{i\in U}{y_{ij}} \geq s_c & \forall j \in V \\
&\quad \sum_{+(ij)}{(1-y_{ij})} + \sum_{-(ij)}{y_{ij}} = \lambda &\\
\label{cut_opt}&\quad \sum_{+(ij)}{(1-y_{ij})} \leq U_{opt}-1 &\\
&\quad x_{ij} \in \{0,1\} &\forall i \in U ~and~ j \in V.
\end{align}}

We now describe the exact iterative method for the \gls{mcfp} (Algorithm~\ref{alg:exact_cell} below). The idea is to make several calls to the \gls{bgepsp}. At each iteration, we seek for a solution with fewer deletions. Constraint \eqref{cut_opt} tells the model that the optimal value is less than $U_{opt}$, which is obtained using previous feasible solutions.

\begin{algorithm}[hbtp]
\caption{Exact iterative method for the \gls{mcfp}}
\label{alg:exact_cell}
\begin{algorithmic}[1]
\Procedure{ECM}{instance $M$}
 \State{let $G$ be the instance of the \gls{bgeps} corresponding to $M$}
 \State{$(a^*,d^*) \gets BGEPS[G]$} \Comment{Solve the \gls{bgeps} for input $G$, obtaining $a^*$ additions and $d^*$ deletions}
 \State{$\mathit{UB} \gets \frac{m}{m+a^*+d^*}$}\label{lin:lim_sup}
 \State{$\mathit{LB} \gets \frac{m-d^*}{m+a^*}$}\label{lin:lim_inf}
 \State{$U_{opt} \gets d^*$} \Comment{A bound for the number of deletions already found}
 \State{$cont \gets 0$}
 \While{$\mathit{UB} > \mathit{LB}$}
  \State{$(a,d) \gets BGEPS(a^*+d^*+cont)[G]$}\label{lin:bgeps} \Comment{Solve the \gls{bgepsp} with $\lambda=a^*+d^*+cont$ for $G$}
  \If{$U_{opt} > d$}
    \State{$U_{opt} \gets d$}
  \EndIf
  \If{$\frac{m-d}{m+a} > \mathit{LB}$}
   \State{$\mathit{LB} \gets \frac{m-d}{m+a}$} \label{lin:lim_inf2}
   \State{$(a^*,d^*) \gets (a,d)$}
  \EndIf
  \State{$\mathit{UB} \gets \frac{m}{m+a+d}$}\label{lin:lim_sup2}
  \State{$cont \gets cont+1$}
 \EndWhile
 \State{\Return{$(a^*,d^*)$}}
\EndProcedure
\end{algorithmic}
\end{algorithm}

The correctness of Algorithm~\ref{alg:exact_cell} is shown in the next result.

\begin{theorem}\label{teo:algorithm_exato}
Algorithm ECM (Algorithm \ref{alg:exact_cell}) returns an optimal solution value for the \gls{mcfp}.
\end{theorem}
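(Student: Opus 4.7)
The plan is to prove the theorem by establishing two loop invariants and a termination argument, and then combining them at loop exit. Invariant (I) asserts that at every step, $LB$ equals the grouping efficacy of a concrete MCFP-feasible solution, namely the one corresponding (via Lemma~\ref{lem:solution}) to the current pair $(a^*,d^*)$. Invariant (II) asserts that at every step, every BGEPS-feasible pair $(a,d)$ satisfies $(m-d)/(m+a) \leq \max(LB,UB)$. Together with the exit condition $UB \leq LB$ and the one-to-one correspondence in Lemma~\ref{lem:solution}, these invariants force the MCFP optimum $\mu^{*}$ to satisfy $\mu^{*} \leq LB$, while invariant (I) gives $LB \leq \mu^{*}$, yielding $\mu^{*} = LB$; hence the returned $(a^{*},d^{*})$ is optimal.

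Invariant (I) is routine: it holds after the initial BGEPS call, and each $LB$-update installs $(m-d)/(m+a)$ for a BGEPS-feasible pair returned by BGEPSP. For invariant (II), I would split into three cases for an arbitrary BGEPS-feasible $(a,d)$. If $a+d \geq \lambda_{\text{cur}}$ (the parameter of the most recent BGEPSP call), Lemma~\ref{lem:limite} gives $(m-d)/(m+a) \leq m/(m+a+d) \leq m/(m+\lambda_{\text{cur}}) = UB$. If $a+d$ equals some previously processed parameter $\lambda_{k}$, then BGEPSP$(\lambda_{k})$ returned the minimum-$d$ (and therefore maximum-$\mu$) solution with total $\lambda_{k}$, and its grouping efficacy was compared against $LB$ at iteration $k$, so $(m-d)/(m+a) \leq LB$. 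If $a+d$ falls in a gap between two consecutively processed parameters, which can occur after an $LB$-improving update (since $a^{*}+d^{*}$ jumps to the new $\lambda_{k}$ while $cont$ increments by only $1$), I would verify algebraically that the newly installed $LB = (m-d_{k})/(m+a_{k})$ dominates $m/(m+\lambda)$ for every $\lambda$ in the gap. Termination follows because $\lambda_{\text{cur}} = (a^{*}+d^{*})+cont$ strictly increases each iteration, so $UB$ strictly decreases while $LB$ remains bounded above by $1$.

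The hardest part will be the third case above: showing that skipped $\lambda$-values cannot contain solutions superior to the currently installed $LB$. The relevant fact is that the $LB$-improvement condition $(m-d_{k})/(m+a_{k}) > (m-d^{*})/(m+a^{*})$, which triggers the update and the jump, together with the minimality of $d_{k}$ in BGEPSP$(\lambda_{k})$, is strong enough to dominate $m/(m+\lambda)$ throughout the skipped range. Here the constraint $\sum_{+(ij)}(1-y_{ij}) \leq U_{opt}-1$ in the BGEPSP formulation plays a supporting role, since it forces each successful call to strictly improve the deletion count and thereby couples $U_{opt}$-updates with $LB$-updates, controlling the possible sequences of jumps.
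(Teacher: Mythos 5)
Your overall skeleton (a lower bound $LB$ that tracks the best solution found so far, an upper bound $UB$ that dominates everything not yet examined, and an exhaustive enumeration of the parameter $\lambda$) is the same as the paper's, and your invariant (I) together with the first two cases of invariant (II) is exactly the content of the paper's appeal to Lemma~\ref{lem:limite} plus the observation that minimizing deletions at fixed $\lambda$ maximizes the grouping efficacy. Where you go beyond the paper is in noticing that Algorithm~\ref{alg:exact_cell} recomputes the parameter as $a^*+d^*+cont$ with $(a^*,d^*)$ \emph{updated inside the loop}, so that an $LB$-improving iteration at parameter $\lambda_k$ makes the next parameter jump to $\lambda_k+k+1$, skipping $\lambda_k+1,\dots,\lambda_k+k$. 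The paper's proof silently assumes no such gaps exist: its key sentence (``if there were a solution value between the new bound and the previous bound, the algorithm would have already found such a value in a previous iteration'') is valid only if every integer from $b^*$ up to the current parameter has actually been processed.

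However, the resolution you propose for the gap case does not go through. You claim the newly installed $LB=(m-d_k)/(m+a_k)$ dominates $m/(m+\lambda)$ for every skipped $\lambda$; taking the worst skipped value $\lambda=\lambda_k+1$ and clearing denominators (using $a_k+d_k=\lambda_k$), that inequality is equivalent to $d_k(\lambda_k+1)\le m$. Nothing in the update condition or in the minimality of $d_k$ forces this: the loop continues past an update precisely when $d_k\ge 1$, and $\lambda_k$ can grow up to about $m(1-LB)/LB$ before termination, which exceeds $m$ whenever $LB<1/2$ (a common regime for these instances), so $d_k(\lambda_k+1)>m$ is entirely possible. In that situation an all-addition solution with $\lambda_k+1$ editing operations would have efficacy $m/(m+\lambda_k+1)>LB$ and could be missed, so your ``hardest part'' remains genuinely open as stated. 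To close it you must either reinterpret the algorithm with $\lambda=b^*+cont$ for the \emph{fixed} initial optimum $b^*$ (which removes the gaps and makes both your argument and the paper's go through), or supply a structural argument about which pairs $(a,d)$ are realizable that rules out the bad configuration. Your side remark about constraint~\eqref{cut_opt} does not help here: forcing $d\le U_{opt}-1$ makes $U_{opt}$ strictly decrease but does not make $LB$ increase (a smaller $d$ at a larger $\lambda$ can still yield a smaller efficacy), and it raises a feasibility question for \gls{bgepsp} that neither your proposal nor the paper addresses.
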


\begin{proof}
Algorithm \ref{alg:exact_cell} seeks the optimal solution value iteratively through several calls to the the \gls{bgepsp}, starting with the number of edge editing operations of the \gls{bgeps}. At each iteration new bounds are calculated (variables $\mathit{LB}$ and $\mathit{UB}$). Iterations are performed until the upper bound is equal to the lower bound, meaning that from that point there is no better solution.

By Lemma~\ref{lem:limite}, lines \ref{lin:lim_sup}, \ref{lin:lim_inf}, and \ref{lin:lim_inf2} indeed  calculate the lower and the upper bounds. It remains to show that line \ref{lin:lim_sup2} actually calculates an upper bound.

The value of an optimal solution $G^*$ of the \gls{bgeps} is not necessarily optimal for the \gls{mcfp}. In this case, the optimal solution value $\mu^*$ of the \gls{mcfp} corresponds to a solution of the \gls{bgeps} with more editing operations than $G^*$. To find $\mu^*$, we use the \gls{bgepsp}. The objective function of the \gls{bgepsp} leads to a solution with the maximum number of edge additions among all solutions with exactly $\lambda$ editing operations.

In line~\ref{lin:bgeps}, a call to the \gls{bgepsp} is made, using $\lambda = a^*+d^* + cont$  editing operations, and new values $a$ and $d$ are calculated. Then line \ref{lin:lim_sup2} actually defines a new upper bound, because if there were a solution value between the new bound and the previous bound, the algorithm would have already found such a value in a previous iteration. Since the upper bound decreases along the iterations, we conclude that Algorithm \ref{alg:exact_cell} works correctly.
\end{proof}

\subsection{New linear-fractional model for the MCFP}

According to Lemma \ref{lem:solution}, the \gls{bgeps} and the \gls{mcfp} have the same feasible solution space. Thus, the constraints of the \gls{bgeps} formulation can be used in a new formulation for the \gls{mcfp}. The adaptation of the objective function is made according to the grouping efficacy described in Section~\ref{sec:of_cell}. By setting $N^{out}_{1} = \sum_{+ (ij)}{(1-y_ {ij})}$ and $N^{in}_{0} = \sum_{-(ij)}{y_{ij}}$, we propose a new formulation for the \gls{mcfp} based on linear-fractional programming:

{\allowdisplaybreaks
\begin{align*}
\max &\quad \frac{m - \sum_{+(ij)}{(1-y_{ij})} }{m + \sum_{-(ij)}{y_{ij}} }	&\\
st &\quad y_{il} + y_{kj} + y_{kl} \leq 2 + y_{ij} &\forall i,k \in U ~and~ j,l \in V \\
&\quad \sum_{j\in V}{y_{ij}} \geq s_r & \forall i \in U \\
&\quad \sum_{i\in U}{y_{ij}} \geq s_c & \forall j \in V \\
&\quad y_{ij} \in \{0,1\} &\forall i \in U ~and~ j \in V.
\end{align*}}

\subsection{Linear Formulation for the MCFP}\label{sec:linear}
In this section we propose a linearization for the linear-fractional model described above. The process consists of adding binary variables $x_{da}$ that assume value 1 if and only if the solution contains $d$ deletions and $a$ additions. Each variable $x_{da}$ is associated with a cost $c_{da} = \frac{m - d}{m + a}$  in the objective function. Let $l_c$ and $l_b$ be lower bounds for the \gls{mcfp} and \gls{bgep}, respectively. Define a set $\mathcal F = \{ (d,a) | c_{da} \geq l_c \text{~and~}  d + a \geq l_b\}$. In this case, $l_c$ can be obtained heuristicaly, whereas $l_b$ can be, for example, the value of the linear relaxation of the \gls{bgep}.

The linear formulation proposed is as follows:

\begin{align}
\label{lf1} \max \quad&\sums_{\forall ~ (d, a) \in \mathcal F}( c_{da}~ x_{da}) &\\
\label{lf2} \text{st} \quad&y_{ik} + y_{lj} + y_{lk} \leq  2 + y_{ij} & \forall~ i, l \in U \text{~and~} k, j \in V\\
\label{lf3} &\sums_{\forall ~ (d, a) \in \mathcal F} x_{da} =  1&\\
\label{lf4} &\sums_{\forall ~ (d, a) \in \mathcal F}  (d ~x_{da}) =  \sum_{-(ij)} y_{ij}&\\
\label{lf5} &\sums_{\forall ~ (d, a) \in \mathcal F}  (a ~x_{da}) =  \sum_{+(ij)} (1-y_{ij})&\\
\label{lf6} &x_{da} \in \{0,1\} &\forall~ (d, a) \in \mathcal F\\
\label{lf7} &y_{ij} \in \{0,1\} &\forall~ i \in U\text{~and~}j \in V
\end{align}

The objective function \eqref{lf1} computes the maximum cost $c_{da} = \frac{m - d}{m + a}$. Constraints \eqref{lf2} eliminate induced subgraphs isomorphic to $P_4$. Constraint \eqref{lf3} imposes that exactly one $x_{da}$ variable should assume value 1. Constraints \eqref{lf4} and \eqref{lf5} state that the number of deletions and additions must be $d$ and $a$, respectively. Constraints \eqref{lf6} and \eqref{lf7} define the domain of the variables.

\subsection{MCFP Instances}\label{sec:ins_mcfp}

The \gls{mcfp} has been explored in the literature for many years, and several works have proposed instances for this problem. In this paper, we used 35 instances available in \citet{Goncalves2004}, which have been used in many other papers. In Table~\ref{tab:instances_mcfp}, we present the instances with its dimensions. To the best of our knowledge, this work is the first one that finds the optimal solutions of all such instances.

\begin{table}[hbtp]
\centering
\footnotesize
\begin{tabular}{||l|c||l|c||}
\toprule
Instance            & Dimension     & Instance                  & Dimension	\\ \midrule
King1982            &$05\times07$   &Kumar1986                  &$20\times23$	\\
Waghodekar1984      &$05\times07$   &Carrie1973b                &$20\times35$	\\
Seifoddini1989      &$05\times18$   &Boe1991                    &$20\times35$	\\
Kusiak1992          &$06\times08$   &Chandrasekharan1989\_1     &$24\times40$	\\
Kusiak1987          &$07\times11$   &Chandrasekharan1989\_2     &$24\times40$	\\
Boctor1991          &$07\times11$   &Chandrasekharan1989\_3-4   &$24\times40$	\\
Seifoddini1986		&$08\times12$   &Chandrasekharan1989\_5     &$24\times40$	\\
Chandrasekaran1986a	&$08\times20$   &Chandrasekharan1989\_6     &$24\times40$	\\
Chandrasekaran1986b	&$08\times20$   &Chandrasekharan1989\_7     &$24\times40$	\\
Mosier1985a         &$10\times10$   &McCormick1972b             &$27\times27$	\\
Chan1982            &$10\times15$   &Carrie1973c                &$28\times46$	\\
Askin1987           &$14\times24$   &Kumar1987                  &$30\times51$	\\
Stanfel1985         &$14\times24$   &Stanfel1985\_1             &$30\times50$	\\
McCormick1972a      &$16\times24$   &Stanfel1985\_2             &$30\times50$	\\
Srinivasan1990      &$16\times30$   &King1982                   &$30\times90$	\\
King1980            &$16\times43$   &McCormick1972c             &$37\times53$	\\
Carrie1973a         &$18\times24$   &Chandrasekharan1987        &$~~40\times100$\\
Mosier1985b         &$20\times20$   & & \\ \bottomrule
\end{tabular}
\caption{Instances of the \gls{mcfp}.}\label{tab:instances_mcfp}
\end{table}

\section{Computational Experiments}\label{sec:results}

In this section we evaluate and compare the algorithms proposed in this work. We use the mixed linear optimization software CPLEX \citep{Cplex}, which is responsible for managing the Branch-and-Cut method, including:
\begin{itemize}
  \item choice of variables for the branch;
  \item execution of the separation algorithm;
  \item addition of cuts generated by the preprocessing procedure.
\end{itemize}

All the algorithms have been run on an Intel Core i7-2600 3.40 GHz machine with 32 GB of RAM and Arch Linux 3.3.4 operating system. The separation algorithm in Section~\ref{sec:sep}, the preprocessing procedure in Section~\ref{sec:pre}, and the exact iterative method for the \gls{mcfp} in Section~\ref{sec:exact_algorithm} have been implemented in C++. All the instances and solutions are available at \url{http://www.ic.uff.br/\~fabio/instances.pdf}.

\subsection{Experimental Results for the BGEP}

The proposed Branch-and-Cut algorithm for the \gls{bgep} was applied to 30 randomly generated instances, as explained in Section~\ref{sec:instances_bgep}, with $p \in \{0.2,0.4,0.6,0.8\}$, $m \in [10,21]$, and $n \in [11,22]$.

We first compare the default separation algorithm incorporated in CPLEX with the separation algorithm described in Section~\ref{sec:sep}. To evaluate these two running scenarios, we use the {\em geometric mean}. The geometric mean of a data set $\{t_1, t_2, \dotsc, t_n \} $ is defined as:
\begin{align}\label{eq:geometric}
 G = \sqrt[n]{t_1 t_2 \dotsm t_n}.
\end{align}

Each scenario is applied to all the $30$ random instances. Table~\ref{tab:comparison} shows the results. From left to right, the columns of the table show: separation algorithm, total running time over all the $30$ instances, geometric mean of the $30$ computational times, and number of times each scenario achieves the best running time. Note that the separation algorithm presented in Section~\ref{sec:sep} clearly influences the convergence speed.

\begin{table}[hbtp]\centering
\begin{footnotesize}
\begin{tabular}{|c|r|c|c|}
 \hline
{\bf Separation algorithm} & {\bf Total time} (s) & {\bf Geometric mean} & {\bf Number of best results}\\ \hline
Dynamic Programming & 77235.28  & 15.92 & 16 \\ \hline
CPLEX Default       & 131068.72 & 16.33 & 14 \\ \hline
\end{tabular}
\end{footnotesize}
\caption{Comparison of separation algorithms for 30 random instances.} \label{tab:comparison}
\end{table}

We now analyze the impact of the preprocessing procedure described in Section~\ref{sec:pre}. Results are presented in Table~\ref{tab:pre}. Each row in the table shows results for instances generated with the same value of $p$. We remark that, in the $\mathbb G(m,n,p)$  model, the probability $p$ is the expected edge density of a random bipartite instance (i.e., the expected number of edges is $mnp$). From left to right, the columns show: instance density, average percentage of fixed variables, and average percentage of generated cuts. (The maximum number of cuts is $mn(n-1)/2 + nm(m-1)/2$, which is achieved by an edgeless bipartite graph.)

\begin{table}[hbtp]\centering
\begin{footnotesize}
\begin{tabular}{|c|c|c|}
\hline
{\bf Instance density} & {\bf Average percentage of fixed variables} &{\bf Average percentage of generated cuts}\\ \hline
0.2 & 19\% & 51\% \\ \hline
0.4 & 1\%  & 11\% \\ \hline
0.6 & 0\%  & 0\%  \\ \hline
0.8 & 0\%  & 0\%  \\ \hline
\end{tabular}
\end{footnotesize}
\caption{Impact of the preprocessing procedure.} \label{tab:pre}
\end{table}

As the instances get denser, the effectiveness of the preprocessing procedure decreases. This is due to the fact that, in a sparse instance, the probability that two vertices are at a distance at least $4$ (see Theorem~\ref{teo:distance}) is greater than in a dense instance.

\subsection{Experimental results for the MCFP}

Table~\ref{tab:exato_sig} shows the results of the application of the exact iterative method for the \gls{mcfp} presented in Section~\ref{sec:exact_algorithm} (with singleton constraints) on the instances shown in Table~\ref{tab:instances_mcfp}. From left to right, the columns show: instance name; best solution value found in the literature \citep{Goncalves2004,Wu2008,Wu2010}; optimum value found by our iterative method (using the grouping efficacy as the objective function); the sum $N^{out}_{1} + N^{in}_{0}$; and the number of edge editing operations obtained by the application of our Branch-and-Cut method presented in Section~\ref{sec:branch} on the corresponding \gls{bgeps} instance (as explained in Lemma~\ref{lem:solution}). We remark that instances marked with * have been erroneously encoded in some works.

Our method finds the optimum for 27 of 35 instances. These optima were previously unknown.
For instances King1980 and Kumar1987, it finds solutions better than the existing solutions in the literature. A point to note is that the values in third and fourth columns are very close; the difference is only $1.81$ on average. In particular, for 17 of the 27 optima in the fourth column, these values coincide. This shows that optimal solutions for the \gls{bgeps} correspond to quite good solutions for the \gls{mcfp}.

Table~\ref{tab:exato_sem_sig} is similar to Table~\ref{tab:exato_sig} and shows the results for the exact iterative method for the \gls{mcfp} without singleton constraints. Values in the second column are taken from \citep{Pailla2010,Wu2009,Elbenani2012}. As in the previous table, instances marked with * have been erroneously encoded in some works.

Again, our method finds the optimum for 27 of 35 instances. For three instances (King1980, Kumar1987, and Stanfel1985\_1), it finds solutions better than the existing ones in the literature. The difference between values in the third and fourth columns is only $1.13$ on average (the difference is zero in $16$ cases).

\begin{table}[htbp]
\centering
\begin{scriptsize}
\begin{tabular}{||l||c|c|c|c||}
\toprule
Instance & Literature & Optimum & $N^{out}_{1} + N^{in}_{0}$ & \gls{bgeps} Optimum \\
\midrule
King1982                 & 73.68      & 73.68          & \textbf{5}     & \textbf{5} \\
Waghodekar1984           & 62.50      & 62.50          & \textbf{9}     & \textbf{9} \\
Seiffodini1989           & 79.59      & 79.59          & \textbf{10}    & \textbf{10}\\
Kusiak1992               & 76.92      & 76.92          & \textbf{5}     & \textbf{5} \\
Kusiak1987               & 53.13      & 53.12          & \textbf{15}    & \textbf{15}\\
Boctor1991               & 70.37      & 70.37          & \textbf{7}     & \textbf{7} \\
Seiffodini1986           & 68.30      & 68.29          & \textbf{13}    & \textbf{13}\\
Chandrasekaran1986a      & 85.25      & 85.24          & \textbf{9}     & \textbf{9} \\
Chandrasekaran1986b	     & 58.72      & 58.71          & 45             & 43         \\
Mosier1985a              & 70.59      & 70.58          & \textbf{10}    & \textbf{10}\\
Chan1982                 & 92.00      & 92.00          & \textbf{4}     & \textbf{4} \\
Askin1987                & 69.86      & 69.86          & 22             & 21         \\
Stanfel1985              & 69.33      & 69.33          & 23             & 22         \\
McCornick1972a           & 51.96      & 51.96          & 49             & 46         \\
Srinivasan1990           & 67.83      & 67.83          & \textbf{46}    & \textbf{46}\\
King1980                 & 55.90      & \textbf{56.52} & 70             & 68         \\
Carrie1973a              & 54.46      & 54.46          & 51             & 47         \\
Mosier1985b              & 42.96      &                &                &            \\
Kumar1986                & 49.65      & 49.64          & 71             & 64         \\
Carrie1973b              & 76.54      & 76.54          & \textbf{37}    & \textbf{37}\\
Boe1991                  & 58.15      & 58.15          & 77             & 73         \\
Chandrasekharan1989\_1   & 100.00     & 100.00         & \textbf{0}     & \textbf{0} \\
Chandrasekharan1989\_2   & 85.11      & 85.10          & \textbf{21}    & \textbf{21}\\
Chandrasekharan1989\_3-4 & 73.51      & 73.50          & \textbf{39}    & \textbf{39}\\
Chandrasekharan1989\_5   & 51.97      & 51.97          & 73             & 70         \\
Chandrasekharan1989\_6   & 47.37      &                &                &            \\
Chandrasekharan1989\_7   & 44.87      &                &                &            \\
McCormick1972b           & 54.27      &                &                &            \\
Carrie1973c              & 46.06      &                &                &            \\
Kumar1987$^*$            & 58.58      & \textbf{58.94} & 62             & 60         \\
Stanfel1985\_1$^*$       & 59.66      & 59.65          & \textbf{71}    & \textbf{71}\\
Stanfel1985\_2           & 50.51      &                &                &            \\
King1982                 & 42.64      &                &                &            \\
McCormick1972c           & 59.85      &                &                &            \\
Chandrasekharan1987      & 84.03      & 84.03          & \textbf{73}    & \textbf{73}\\
\bottomrule
\end{tabular}
\end{scriptsize}
\caption{Results of the exact iterative method for the MCFP with singleton constraints $2 \times 2$.}
\label{tab:exato_sig}
\end{table}

\begin{table}[htbp]
\centering
\begin{scriptsize}
\begin{tabular}{||l||c|c|c|c||}
\toprule
Instance & Literature & Optimum & $N^{out}_{1} + N^{in}_{0}$ & \gls{bgeps} Optimum \\
\midrule
King1982$^*$             & 75    & 75             & \textbf{4}  & \textbf{4} \\
Waghodekar1984           & 69.57 & 69.56          & \textbf{7}  & \textbf{7} \\
Seiffodini1989           & 80.85 & 80.85          & \textbf{9}  & \textbf{9} \\
Kusiak1992               & 79.17 & 79.16          & \textbf{5}  & \textbf{5} \\
Kusiak1987               & 60.87 & 60.86          & \textbf{9}  & \textbf{9} \\
Boctor1991               & 70.83 & 70.83          & \textbf{7}  & \textbf{7} \\
Seiffodini1986           & 69.44 & 69.44          & \textbf{11} & \textbf{11} \\
Chandrasekaran1986a      & 85.25 & 85.24          & \textbf{9}  & \textbf{9} \\
Chandrasekaran1986b      & 58.72 & 58.71          & 45          & 43 \\
Mosier1985a              & 75    & 75             & \textbf{7}  & \textbf{7} \\
Chan1982                 & 92    & 92             & \textbf{4}  & \textbf{4} \\
Askin1987                & 74.24 & 74.24          & \textbf{17} & \textbf{17} \\
Stanfel1985              & 72.86 & 72.85	      & 19          & 18 \\
McCornick1972a           & 53.33 & 53.33 	      & \textbf{42} & \textbf{42} \\
Srinivasan1990           & 69.92 & 69.92 	      & 40          & 39 \\
King1980                 & 57.96 & \textbf{58.04} & 60          & 59 \\
Carrie1973a              & 57.73 & 57.73          & 41          & 40 \\
Mosier1985b              & 43.97 &                &             &    \\
Kumar1986                & 50.81 & 50.80          & 61          & 60 \\
Carrie1973b              & 79.38 & 79.37          & 33          & 32 \\
Boe1991                  & 58.79 & 58.79          & 75          & 70 \\
Chandrasekharan1989\_1   & 100	 & 100            & \textbf{0}  & \textbf{0} \\
Chandrasekharan1989\_2   & 85.11 & 85.10          & \textbf{21} & \textbf{21} \\
Chandrasekharan1989\_3-4 & 73.51 & 73.50          & 40          & 39 \\
Chandrasekharan1989\_5   & 53.29 & 53.28          & 71          & 64   \\
Chandrasekharan1989\_6   & 48.95 &                &             &    \\
Chandrasekharan1989\_7   & 46.58 &                &             &    \\
McCormick1972b           & 54.82 &                &             &    \\
Carrie1973c              & 47.68 &                &             &    \\
Kumar1987$^*$            & 62.86 & \textbf{63.04} & \textbf{51} & \textbf{51} \\
Stanfel1985\_1$^*$       & 59.66 & \textbf{59.77} & 70          & 67 \\
Stanfel1985\_2           & 50.83 &                &             &    \\
King1982                 & 47.93 &                &             &    \\
McCormick1972c           & 61.16 &                &             &    \\
Chandrasekharan1987      & 84.03 & 84.03          & \textbf{73} & \textbf{73} \\
\bottomrule
\end{tabular}
\end{scriptsize}
\caption{Results of the exact iterative method for the MCFP without singleton constraints.}\label{tab:exato_sem_sig}
\end{table}

In Table~\ref{tab:comp_mcfp} we compare the running times obtained by applying the two exact approaches for the \gls{mcfp} proposed in this work (disregarding singleton constraints) on the instances of Table~\ref{tab:instances_mcfp}. The linear model described in Section~\ref{sec:linear} achieves  running times faster than those by the exact iterative method in Section~\ref{sec:exact_algorithm}, with few exceptions. (The addition of singleton constraints produces similar results.)

\begin{table}[hbtp]
\centering
\footnotesize
\begin{tabular}{|c|c|c|}
\hline
{\bf Instance} 		     & {\bf Linear Model} & {\bf Iterative Model} \\
\hline
\hline
King1982                 & 0.01     & 0.16      \\ \hline
Waghodekar1984           & 0.01     & 0.07      \\ \hline
Seifoddini1989           & 0.03     & 0.09      \\ \hline
Kusiak1992               & 0.01     & 0.02      \\ \hline
Boctor1991               & 0.01     & 0.14      \\ \hline
Kusiak1987               & 0.06     & 0.29      \\ \hline
Seifoddini1986		     & 0.03     & 0.18      \\ \hline
Chandrasekharan1986a     & 0.04     & 2.06      \\ \hline
Chandrasekharan1986b     & 4.94     & 81.46     \\ \hline
Mosier1985a              & 0.01     & 0.03      \\ \hline
Chan1982                 & 0.02     & 0.01      \\ \hline
Askin1987                & 0.09     & 0.49      \\ \hline
Stanfel1985              & 0.11     & 0.49      \\ \hline
McCormick1972            & 144.91   & 600.98    \\ \hline
Srinivasan1990           & 0.54     & 7.24      \\ \hline
King1980                 & 125.62   & 1156.23   \\ \hline
Carrie1973               & 42.32    & 87.13     \\ \hline
Mosier1985b              & --       & --        \\ \hline 
Kumar1986                &1771.99   & 23928.70  \\ \hline
Boe1991                  & 305.48   & 2145.24   \\ \hline
Carrie1973               & 14.55    & 1.78      \\ \hline
Chandrasekharan1989\_1   & 0.15     & 0.02      \\ \hline
Chandrasekharan1989\_2   & 0.44     & 10.08     \\ \hline
Chandrasekharan1989\_3-4 & 0.78     & 17.46     \\ \hline
Chandrasekharan1989\_5   &48743.90  & 371233.00 \\ \hline
Chandrasekharan1989\_6   & --       & --        \\ \hline 
Chandrasekharan1989\_7   & --       & --        \\ \hline 
McCormick1972          	 & --       & --        \\ \hline 
Carrie1973               & --       & --        \\ \hline 
Kumar1987                & 41.53    & 183.71    \\ \hline
Stanfel1985\_1           &2622.06   & 13807.50  \\ \hline
Stanfel1985\_2           & --       & --        \\ \hline 
King1982                 & --       & --        \\ \hline 
McCormick1972            & --       & --        \\ \hline 
Chandrasekharan1987      & 18.22    & 325.53    \\ \bottomrule
\end{tabular}
\caption{Comparison between running times (in seconds) of the exact methods for the MCFP.}\label{tab:comp_mcfp}
\end{table}

\section{Conclusions}\label{sec:conclusions}

This work has investigated the close relationships between the \gls{bgep} and the \gls{mcfp}. This opens new possibilities of research, in the sense that each every contribution to one of the problems may be applied to the other.

We have proposed a new Branch-and-Cut method for the \gls{bgep} based on a dynamic programming separation algorithm. Our method has been applied to $30$ randomly generated instances with edge densities ranging from $0.2$ to $0.8$ and sizes from $110$ to $462$ vertices. Experimental results show that the proposed method outperforms the CPLEX standard separation algorithm.

We have also described a new preprocessing procedure for the \gls{bgep} based on theoretical developments related to vertex distances in the input graph (Theorem~\ref{teo:distance}). The procedure is effective to fix variables and generate cuts for low density random instances.

The similarity between the \gls{bgep} and \gls{mcfp} has been explored. We have shown that these problems have the same feasible solution space. This fact allows the use of mathematical formulations for the \gls{bgep} to solve the \gls{mcfp}. It is worth remarking that the combinatorial structure of the \gls{bgep} can be directly applied to the solution of the \gls{mcfp}. An example is the use of constraints (11) and (18) in two formulations for the \gls{mcfp} described in this work; such constraints are used to eliminate induced subgraphs isomorphic to $P_4$, which are forbidden for bicluster graphs.

We have shown that good solutions of the \gls{bgep} correspond to good solutions of the \gls{mcfp}, i.e., decreasing the number of editing operations corresponds to obtaining a matrix  permutation with fewer 1's outside and 0's inside diagonal blocks.

Our contributions to the \gls{mcfp} include:
\begin{itemize}
 \item a new exact iterative method based on several calls to a parameterized version of the \gls{mcfp};
 \item a new linear-fractional formulation and its linearization;
 \item a experimental study of the impact of using a linear objective function for the \gls{mcfp};
 \item a separated analysis of problems with or without singleton constraints;
 \item exact resolution of most instances of the \gls{mcfp} found in literature, some crated almost $40$ years ago.
\end{itemize}

Ongoing work includes the study of other variants of the \gls{mcfp} and their correspondence with adapted versions of the \gls{bgep}.

\bibliographystyle{model1-num-names}
\bibliography{refs}

\end{document}